\newtheorem{theorem}{Theorem}[section]
\newtheorem{lemma}[theorem]{Lemma}
\newtheorem{notation}[theorem]{Notation}
\newtheorem*{proposition*}{Proposition}
\theoremstyle{remark}
\numberwithin{equation}{section}
\newcommand{\NN}{{\mathbb{N}}}
\newcommand{\ZZ}{{\mathbb{Z}}}
\newcommand{\RR}{{\mathbb{R}}}
\newcommand{\QQ}{{\mathbb{Q}}}
\DeclareMathOperator{\vol}{vol}
\newcommand{\GL}{\mathrm{GL}}
\newcommand{\SL}{\mathrm{SL}}
\newcommand{\SH}{\mathsf{H}}
\newcommand{\SU}{\mathsf{U}}
\newcommand{\sg}{\mathsf{g}}
\newcommand{\sh}{\mathsf{h}}
\newcommand{\su}{\mathsf{u}}
\newcommand{\T}{{\mathsf{T}}}
\newcommand{\ox}{\mathbf x}
\newcommand{\oy}{\mathbf y}
\newcommand{\ow}{\mathbf w}
\newcommand{\ov}{\mathbf v}
\providecommand{\op}{\mathbf{p}}
\providecommand{\oq}{\mathbf{q}}
\newcommand{\vep}{\varepsilon}
\newcommand{\NT}{\mathbf N}
\newcommand{\VT}{\mathbf V}
\newcommand{\id}{\mathrm{I}}
\renewcommand{\varpi}{\pi}%{\clubsuit}
\newcommand{\origin}{O}
\newcommand{\Mat}{\mathrm{Mat}}
\def\moverlay{\mathpalette\mov@rlay}
\def\mov@rlay#1#2{\leavevmode\vtop{%
   \baselineskip\z@skip \lineskiplimit-\maxdimen
   \ialign{\hfil$\m@th#1##$\hfil\cr#2\crcr}}}
\newcommand{\charfusion}[3][\mathord]{
    #1{\ifx#1\mathop\vphantom{#2}\fi
        \mathpalette\mov@rlay{#2\cr#3}
      }
    \ifx#1\mathop\expandafter\displaylimits\fi}
\definecolor{cmd}{rgb}{1.0, 0.35, 0.21}
\begin{document}
\title[$S$-arithmetic quantitative Khintchine-Groshev theorem]
{A Quantitative Khintchine-Groshev Theorem for $S$-arithmetic Diophantine Approximation}
\author{Jiyoung Han}

\subjclass[2020]{22F30, 11D45, 11K60, 22E35, 11J61}
% 22F30 homogeneous spaces (for general action on manifolds on preserving geometrical structures)
% 11D45 counting solutions of Diophantine equations
% 11K60 Diophantine approximation in prob. number theroy
% 22E35 Analysis on p-adic Lie groups
% 11J61 Approximation in non-Archimedean valuations

\maketitle
\begin{abstract}
%There had been many results for a quantitative type of Khintchine-Groshev theorem around late 1950's and early 1960's \cite{LeVeque1958, Erdos1959, LeVeque1960, Schmidt1960a, Szusz196263}.
In \cite{Schmidt1960a}, Schmidt studied a quantitative type of Khintchine-Groshev theorem for general (higher) dimensions. 
Recently, a new proof of the theorem was found, which made it possible to relax the dimensional constraint and more generally, to add on the congruence condition \cite{AGY2021}.

In this paper, we generalize this new approach to $S$-arithmetic spaces and obtain a quantitative version of an $S$-arithmetic Khintchine-Groshev theorem.
During the process, we consider a new, but still natural $S$-arithmetic analog of Diophantine approximation, which is different from the one formerly established (see \cite{KT2007}). Hence for the sake of completeness, we also deal with the convergent case of the Khintchine-Groshev theorem, based on this new generalization.
\end{abstract}

%\tableofcontents

%\subsection*{Acknowledgments} 

\section{Introduction}\label{Introduction}

% Dirichlet
For any $A\in \Mat_{m,n}(\RR)$ and $T>0$, Dirichlet's approximation theorem says that there is a nontrivial integral solution $(\op, \oq)\in \ZZ^m\times \ZZ^n$ to the system of inequalities
\[
\|\oq\|<T
\quad\text{and}\quad
\|A\oq+\op\|^m<T^{-n}
\]
and as a corollary, one can find infinitely many solutions $(\op,\oq)\in \ZZ^m\times \ZZ^n$ satisfying the inequality
\[
\|A\oq+\op\|^m < \|\oq\|^{-n}.
\]

% Khintchine-Groshev
In general, we say that $A\in \Mat_{m,n}(\RR)$ is \emph{$\psi$-approximable}, where $\psi:\RR_{>0}\rightarrow \RR_{> 0}$ is non-increasing, if there are infinitely many $(\op,\oq)\in \ZZ^m\times \ZZ^n$ for which
\begin{equation}\label{KG inequality}
\|A\oq+\op\|^m < \psi(\|\oq\|^n)
\end{equation}
and Khintchine-Groshev theorem states that for \emph{almost all} (\emph{almost no}, respectively) $A\in \Mat_{m,n}(\RR)$ is $\psi$-approximable if and only if $\sum_{k\in \NN} \psi(k)=\infty$ ($<\infty$, respectively) \cite{Khintchine1924, Khintchine1926, Groshev1938}. Here, when $m=n=1$, the certain monotonicity of the function $\psi$ is necessary for the divergent case \cite{DS1941}.
The case when $m=n=1$ is the subject of the Duffin-Schaeffer conjecture (see \cite{DS1941} and also \cite{BV2010} and references therein for historical details) which was recently established in \cite{KM2020}.

%%%
When $\sum_{k\in \NN} \psi(k)$ diverges, one can quantify the Khintchine-Groshev theorem, asking for an asymptotic formula for the number $\NT_{\psi}(T)$ of $(\op, \oq)\in \ZZ^m\times \ZZ^n$ for which \Cref{KG inequality} holds with $\|\oq\|^n<T$. This problem was studied by Schmidt \cite{Schmidt1960a} for the case when $m\ge 3$ and $n=1$. See also \cite[Chapter I.5]{Sprindzuk1979} for the more general case, and \cite{LeVeque1958, Erdos1959, LeVeque1960, Szusz196263} for one-dimensional cases.
%%%
In \cite{AGY2021}, Alam, Ghosh and Yu provided a new approach for the proof of the quantitative Khintchine-Groshev theorem, based on \cite{Schmidt1960b} and \cite{GKY20} so that they extended the theorem to the case when $m+n\ge 3$ and refined the statement by adding a certain congruence condition. This congruence condition for the Khintchine-Groshev theorem was earlier considered in \cite{NRS2020}. 

%%%
The main purpose of this article is to generalize the quantitative result of \cite{AGY2021} to $S$-arithmetic spaces.
 
%%%%%%%%%%%%%%%%%%%%%%%%%%%%%%%%%%%%%%%%%%%%%%%%%%%%%%%%%%%%%%%%%%%%%%
%%%%%%%%%%%%%%%%%%%%%%%%%%%%%%%%%%%%%%%%%%%%%%%%%%%%%%%%%%%%%%%%%%%%%%
\subsection*{S-arithmetic Set-ups}\label{S-arithmetic Set-ups}
Let $S=\{\infty, p_1, \ldots, p_s\}$ be the finite set of places over $\QQ$, where $|\cdot|_\infty$ represents the supremum norm of $\RR$ and $|\cdot|_p$ is the $p$-adic norm for a prime $p$ (we also use $|\cdot|$ for the absolute value of $\RR\cup\{\infty\}$ to denote the difference between two quantities). Denote by $S_f=\{p_1,\ldots, p_s\}$ the set of finite places in $S$. 
For $p\in S$, let $\QQ_p$ be the completion field of $\QQ$ with respect to the norm $|\cdot|_p$. When $p=\infty$, $\QQ_p=\RR$. 
Let $\QQ_S=\prod_{p\in S} \QQ_p$ and we call $\QQ_S^d=\prod_{p\in S} \QQ_p^d$ for $d\in \NN$, \emph{an $S$-arithmetic space}. If we denote an element of $\QQ_S^d$ by $\oy=(\oy_p)_{p\in S}$ or $ \oy=(\oy_\infty, \oy_{p_1}, \ldots, \oy_{p_s})$, where $\oy_p\in \QQ_p^d$, the norm of the $S$-arithmetic space is defined by
\[
\|\oy\|_S=\max\{\|\oy_p\|_p: p\in S\}.
\]

Let us assign the measure $\vol$ on $\QQ_S^d$ as the product measure $\prod_{p\in S} \vol_p$, where $\vol_\infty$ is the usual Lebesgue measure and $\vol_p$ is the Haar measure for which $\vol_p(\ZZ_p^d)=1$.
The volume $\vol=\prod_{p\in S}\vol_p$ also stands for the Haar measure on $\Mat_{m,n}(\QQ_S)=\prod_{p\in S}\Mat_{m,n}(\QQ_p)$ by considering $\Mat_{m,n}(\QQ_p)\simeq \QQ_p^{mn}$ for each $p\in S$.
We simply denote $d\vol(\oy)$ by $d\oy$ in integral formulas.

%%%
Consider the diagonal embedding $\Delta:\QQ\rightarrow \QQ_S$ given as $\Delta(z)=(z,z,...,z)$. \emph{The set of rationals} in $\QQ_S$ is the image $\Delta(\QQ)$ of $\QQ$ under this diagonal embedding.
It is well-known that if we let $$\ZZ_S=\{z\in \QQ : |z|_\nu \le 1\;\text{for}\;\nu\notin S\}=\ZZ[1/p_1\cdots p_s],$$ the image $\Delta(\ZZ_S)$ is \emph{the ring of $S$-integers} in $\QQ_S$. For simplicity, we will use the notation $\ZZ_S$ instead of $\Delta(\ZZ_S)$. 

Notice that any ideal of $\ZZ_S$ is of the form $N\ZZ_S$ for some $N\in \NN_S$, where
\[\NN_S=\{N'\in \NN: \gcd(N', p_1\cdots p_s)=1\}.\]
Hence one can define \emph{a congruence condition} on $\ZZ_S$ as follows: 
\[z_1=z_2 \mod N
\Leftrightarrow
z_1-z_2\in N\ZZ_S.
\]

%%%
Let $\T=(T_p)_{p\in S}$ be an element of $\RR_{\ge 0}\times \prod_{p\in S_f} p^\ZZ$, where we define $p^\ZZ$ by $\{p^{k}: k\in \ZZ\}$. We say that $\T_1=(T^{(1)}_p)_{p\in S}\succeq \T_2=(T^{(2)}_p)_{p\in S}$ if $T^{(1)}_p \ge T^{(2)}_p$ (as positive real numbers) for all $p\in S$, and denote $\T\rightarrow \infty$ when $T_p\rightarrow \infty$ for all $p\in S$. 

For an element $\oy=(\oy_p)_{p\in S}\in \QQ_S^d$, we will denote $\|\oy_p\|_p$ by $\|\oy\|_p$ since it will not cause any confusion. Since we identify $\Delta(\ZZ_S^d)$ with $\ZZ_S^d$, we also use the notation $\ov=(\ov)_{p\in S}$ for an element $\ov\in\ZZ_S^d$. 

Let us settle a few more notations. For two sets $A$ and $B$, we will denote the set difference $A\cap B^c$ by $A-B$. If we say that $f\ll_n g$ for functions $f,\;g$ and a quantity $n$, it implies that there is a positive constant $C_n>0$ such that $f\le C_n g$, where $C_n$ depends only on $n$. Denote $f\asymp_n g$ when $f\ll_n g$ and $g \ll_n f$, etc.

%%%%%%%%%%%%%%%%%%%%%%%%%%%%%%%%%%%%%%%%%%%%%%%%%%%%%%%%%%%%%%%%%%%%%%%
%%%%%%%%%%%%%%%%%%%%%%%%%%%%%%%%%%%%%%%%%%%%%%%%%%%%%%%%%%%%%%%%%%%%%%%
\subsection*{$S$-arithmetic Analogs and Main Results}
% New S-adic Dirichlet
Based on the proof of Dirichlet's theorem, 
since 
\[\begin{gathered}
\#\left\{\oq\in \ZZ_S^n : \|\oq\|_p\le T_p,\;\forall p\in S \right\}
=\Big(2\prod_{p\in S} T_p + 1\Big)^n;\\
\#\;\text{translates of}\; \Big( [0, T_\infty^{-\frac n m})\times 
\hspace{-0.07in}\prod_{p\in S_f} p^{z_p}\ZZ_p\Big)^m\text{in}\; \Big([0,1)\times \hspace{-0.07in}\prod_{p\in S_f} \ZZ_p\Big)^m \asymp_{S,m,n} \prod_{p\in S} T_p^n,
\end{gathered}\]
where $z_p\in \ZZ$ is the largest integer such that $p^{z_p}\le T_p^{n/m}$,
one can show that there is $C_p\ge 1$ for each $p\in S_f$ and $C_\infty=1$ such that the following holds: 
for any $A=(A_p)_{p\in S}\in \Mat_{m,n}(\QQ_S)$ and any $\T=(T_p)_{p\in S}$, $T_p\ge 1$, there is a nontrivial $S$-integral solution $(\op, \oq)\in \ZZ_S^m\times \ZZ_S^n$ satisfying the inequalities
\begin{equation}\label{New Dirichlet thm}
\|\oq\|_p \le T_p
\quad\text{and}\quad
\|A_p\oq+\op\|_p^m \le C_p T_p^{-n},\; \forall p\in S.
\end{equation}
%The reason of the necessity of the constant $C_p$ is that the $p$-adic norm takes its values which are integer powers of $p$. 
%One can take $C_p=p^m$ for $p\in S_f$, and if we restrict $T_p\in \{p^{mz}: z\in \NN\}$ for instance, then one can choose $C_p=1$.
%%%
One can therefore deduce that
there are infinitely many $(\op, \oq)\in \ZZ_S^m\times\ZZ_S^n$ for which
\begin{equation}\label{New Dirichlet cor}
\|A_p\oq+\op\|_p^m \le C_p \min\{1,\|\oq\|_p^{-n}\},\; \forall p\in S.
\end{equation}

% Old S-adic Dirichlet
In \cite{KT2007}, a different type of an $S$-arithmetic  Dirichlet theorem was previously introduced using the $S$-norm: 
there is $C>0$ such that for any $A\in \Mat_{m,n}(\QQ_S)$ and any $T>1$, one has a nontrivial $S$-integral solution $(\op,\oq)\in \ZZ_S^m\times \ZZ_S^n$ satisfying the inequalities
\begin{equation}\label{Old Dirichlet thm}
\|\oq\|_S\le T
\quad\text{and}\quad
\|A\oq + \op\|_S^m \le C T^{-n}.
\end{equation} 
As a corollary, one has that 
there are infinitely many $(\op, \oq)\in \ZZ_S^n\times\ZZ_S^m$ for which
\begin{equation}\label{Old Dirichlet cor}
\|A\oq+\op\|_S^m \le C\|\oq\|_S^{-n}.
\end{equation}

Let us remark that one hand, the $S$-arithmetic Dirichlet theorem given by \Cref{New Dirichlet thm} implies the one given by \Cref{Old Dirichlet thm}, and on the other hand, the $S$-arithmetic Dirichlet corollary given by \Cref{Old Dirichlet cor} implies the one given by \Cref{New Dirichlet cor}.

% Khintchine-Groshev theorem
\begin{notation} We call $\psi=(\psi_p)_{p\in S}$ \emph{a collection of approximation functions} if for each $p\in S$, $\psi_p:\RR_{>0} \rightarrow \RR_{> 0}$ is a non-increasing function such that $\psi_p((0,1])\equiv 1$. When $p\in S_f$, let us further assume that for each $k\in \ZZ$, $\psi_p$ is constant on $\{p^{k'}: k'=kn, kn+1, kn+2, \ldots, kn+(n-1)\}$ and $\psi_p(p^{\ZZ})\in p^{m\ZZ}$, where $p^{\ZZ}:=\{p^k: k\in \ZZ\}$ and $p^{m\ZZ}:=\{p^k: k\in m\ZZ\}$.
\end{notation}

Here, the assumptions above are mild assumptions for notational simplicity. The theorems below would hold for $\psi=(\psi_p)_{p\in S}$, where each $\psi_p$ is a bounded non-increasing function with minor modifications.

Our first theorem shows the classical Khintchine-Groshev theorem with the new $S$-arithmetic setting based on \Cref{New Dirichlet cor}. The analogs related to \Cref{Old Dirichlet cor} can be deduced from \cite{KT2007, MG2009, DG2022}, which answer to the more delicate question suggested by Baker and Sprind$\breve{\text{z}}$uk, which is related to Diophantine approximation on manifolds. The original work for the real case was accomplished in \cite{KM1998}. See also \cite{GR2015} and \cite{DKL2005} for a classical and a quantitative Khintchine-Groshev theorem for function fields, respectively, and \cite{AGP2012} for a positive characteristic version.

\vspace{0.1in}
%%%%%%%%%%%%%%%
Our first result is the analog of the Khintchine-Groshev theorem in this setting.
%We say that $\ov_1= \ov_2\mod N$ for $\ov_1, \ov_2\in \ZZ_S^d$ and $N\in \NN_S:=\{N'\in \NN: \gcd(N', p_1\cdots p_s)=1\}$, where $S=\{\infty, p_1, \ldots, p_s\}$, if $\ov_1-\ov_2\in N\ZZ_S^d$.

% qualified Khintchine-Groshev
\begin{theorem}\label{New Khintchine-Groshev thm} 
Assume $d=m+n\ge 3$.
Let $\psi=(\psi_p)_{p\in S}$ be a collection of approximating functions. Fix $N\in \NN_S$ and a pair $(\ov_m, \ov_n)\in \ZZ_S^m\times \ZZ_S^n$.

A system of inequalities
\[
\|A_p\oq +\op\|_p^m \le \psi(\|\oq\|_p^n),\quad \forall p\in S
\]
has infinitely many $S$-integer solutions 
\[(\op, \oq) \in \left\{(\ow_m, \ow_n)\in \ZZ_S^m\times \ZZ_S^n : (\ow_m, \ow_n)\equiv (\ov_m, \ov_n)\mod N\right\}
\]
\begin{enumerate}
\item for almost no $A\in \Mat_{m,n}(\QQ_S)$
$\Leftrightarrow$
$\int_{\QQ_S^n} \prod_{p\in S} \psi_p(\|\oy\|^n_p) d\oy <\infty$;
\item for almost all $A\in \Mat_{m,n}(\QQ_S)$
$\Leftrightarrow$
$\int_{\QQ_S^n} \prod_{p\in S} \psi_p(\|\oy\|^n_p) d\oy =\infty$.
\end{enumerate}
\end{theorem}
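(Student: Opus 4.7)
\textbf{My plan} is to follow the strategy of \cite{AGY2021}, lifted to the $S$-arithmetic homogeneous space $X_S = \SL_d(\QQ_S)/\SL_d(\ZZ_S)$. The first step is to recast the Diophantine condition as lattice-point counting: to each $A \in \Mat_{m,n}(\QQ_S)$ I would attach
\[
g_A = \begin{pmatrix} I_m & A \\ 0 & I_n \end{pmatrix} \in \SL_d(\QQ_S),
\]
so that the integer solutions $(\op, \oq) \in \ZZ_S^m \times \ZZ_S^n$ of the inequality at scale $\T$ are in bijection with the points of $g_A \ZZ_S^d$ lying inside the ``$\psi$-box''
\[
E_\T = \Bigl\{(\oy,\ow) \in \QQ_S^m \times \QQ_S^n : \|\oy\|_p^m \le \psi_p(\|\ow\|_p^n),\ \|\ow\|_p \le T_p,\ \forall p \in S\Bigr\}.
\]
The congruence data $(\ov_m,\ov_n) \bmod N$ amount to replacing $\ZZ_S^d$ by the coset $(\ov_m,\ov_n) + N\ZZ_S^d$, equivalently to working on a finite cover $X_S^N$ of $X_S$ associated with a principal congruence subgroup of $\SL_d(\ZZ_S)$.

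For the convergence case I would apply the first Borel--Cantelli lemma along an ``$S$-arithmetic dyadic'' sequence $\T_k \to \infty$ (with $T_{k,p}$ running through a geometric progression at $p = \infty$ and through $p^{n\ZZ}$ at $p \in S_f$, consistently with the stipulated discretization of $\psi_p$). For fixed $\oq \equiv \ov_n \pmod N$ and a fundamental domain for $A \bmod N\Mat_{m,n}(\ZZ_S)$, a column change of variable shows that the $\vol$-measure of the $A$ for which some $\op \equiv \ov_m \pmod N$ realises the inequality equals a constant (depending only on $N$) times $\prod_{p \in S} \psi_p(\|\oq\|_p^n)$. Summing over $\oq$ inside each dyadic shell and comparing the resulting Riemann-type sum with the $S$-arithmetic integral $\int_{\QQ_S^d} \prod_{p} \psi_p(\|\oy\|_p^n)\, d\oy$, the convergence hypothesis produces a summable family, and the first Borel--Cantelli lemma eliminates almost every $A$.

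For the divergence case I would introduce on $X_S^N$ the Siegel-type transform $\widehat{f_k}$ of a truncation such as $\one_{E_{\T_k} \setminus E_{\T_{k-1}}}$. The $S$-arithmetic Siegel mean-value formula on the congruence cover gives
\[
\int_{X_S^N} \widehat{f_k}\, d\mu \;\asymp\; \vol\bigl(E_{\T_k} \setminus E_{\T_{k-1}}\bigr),
\]
whose partial sums diverge under the hypothesis. To conclude via a quasi-independence Borel--Cantelli that almost every $A$ visits $\{\widehat{f_k}\neq 0\}$ infinitely often, the matching two-point estimate
\[
\int_{X_S^N} \widehat{f_j}\, \widehat{f_k}\, d\mu \;\ll\; \Bigl(\int \widehat{f_j}\, d\mu \Bigr) \Bigl(\int \widehat{f_k}\, d\mu \Bigr),\qquad j \le k,
\]
is needed, which itself reduces to an $S$-arithmetic Rogers--Schmidt formula for $\int (\widehat{f})^2\, d\mu$ together with the AGY cancellation argument that neutralises the contribution of reducible pairs as soon as $d = m + n \ge 2$.

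The main obstacle is precisely this $S$-arithmetic second-moment step. Extending Rogers' formula from $\SL_d(\RR)/\SL_d(\ZZ)$ to $\SL_d(\QQ_S)/\SL_d(\ZZ_S)$ will require a classification of $\SL_d(\ZZ_S)$-orbits on pairs of primitive vectors of $\ZZ_S^d$ and careful bookkeeping of the local arithmetic constants --- residue counts modulo $N$ at $p = \infty$ and index terms at each $p \in S_f$ --- since primitivity of a vector in $\ZZ_S^d$ is a simultaneous condition at every finite place. This is where the archimedean and non-archimedean sides become genuinely intertwined. Once the $S$-arithmetic Rogers formula and its two-point consequences are in place, the AGY refinement that permits the dimensional constraint $m + n \ge 2$ transfers directly, and the remaining Borel--Cantelli argument along $\T_k$ proceeds along standard lines.
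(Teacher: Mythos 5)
Your convergence argument is essentially the paper's: reduce to $N=1$, estimate the measure of the bad set for each fixed $\oq$, sum, and invoke the first Borel--Cantelli lemma. On the divergence side, however, there is a genuine gap. Your Siegel-transform and quasi-independence argument is carried out on the homogeneous space $X_S^N=\SL_d(\QQ_S)/\Gamma_N$ and, even if completed, would only produce a statement for $\mu$-a.e.\ lattice $\Lambda\in X_S^N$. But the Khintchine--Groshev statement concerns lattices of the special form $g_A(\ZZ_S^d+\ov_d/N)$ with $A\in\Mat_{m,n}(\QQ_S)$, and the set $\{g_A\}$ is a $\mu$-null subset of $\SL_d(\QQ_S)$, so "almost every $\Lambda$'' gives no information whatsoever about "almost every $A$''. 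You wave at this with "the AGY refinement transfers directly,'' but this transfer is precisely the technical core of the argument, not an afterthought: one must construct shrinking neighborhoods $\SH_\ell$ of the identity in the lower-triangular group $\SH$, perturbed approximating families $\psi^\pm_\ell$ pinching $\psi$, show that $E_{\psi^-_\ell}(\T^-)\subseteq\sh E_\psi(\T)\subseteq E_{\psi^+_\ell}(\T^+)$ for $\sh\in\SH_\ell$, and push the a.e.\ statement through the diffeomorphism $\SH\times\SU\to\SL_d(\QQ_S)$ by Fubini. None of this is present in your sketch, and the divergence case does not close without it.

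Beyond the gap, you also misidentify the main analytic obstacle and take a strictly weaker route than the paper. You propose to derive an $S$-arithmetic Rogers formula and covariance bounds for Siegel transforms, but the paper proves something stronger (the quantitative Theorem~1.3, with an error term of size $\vol(E_\psi(\T))^\delta$) and only needs a variance/Chebyshev bound on the congruence cover, which it imports as a black box from \cite[Theorem~5.2]{GH22}. The probabilistic driver is then a Schmidt-style discretization of the counting along $|\T'|=k^\alpha$ combined with the first (convergence) Borel--Cantelli lemma applied to the variance bound, not the second-moment quasi-independence Borel--Cantelli you invoke. Your covariance-based scheme, even after repair of the null-set issue, would give only the qualitative divergence statement and not the asymptotic count, so it buys less at comparable cost.
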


Here since $\int_{\QQ_S^n} \prod_{p\in S} \psi_p d\oy =\prod_{p\in S} \int_{\QQ_p^n} \psi_p d\oy_p$ (and each $\psi_p$ is a non-negative function with $\int_{\QQ_p^n} \psi_p d\oy_p>0$), the integral diverges if and only if there is $p\in S$ such that $\int_{\QQ_p^n} \psi_p d\oy_p =\infty$.
Equivalently,
\[
\lim_{\T\rightarrow\infty}
\int_{\{\oy\in \QQ_S^n: \|\oy\|^n_p\le T_p,\;\forall p\in S\}}
\prod_{p\in S} \psi_p (\|\oy\|_p^n) d\oy =\infty.
\]

%%%
The convergent part of the above theorem follows using the classical Borel-Cantelli lemma so that it holds even for the case when $m=n=1$.
The divergent case is a direct consequence of the theorem below.

%%%%%%%%%%%%%%%
\begin{theorem}\label{main thm 1}
Assume $m+n\ge 3$.
Let $\psi=(\psi_p)_{p\in S}$ be a collection of approximating functions.
Let $N\in \NN_S$ and fix a pair $(\ov_m, \ov_n)\in \ZZ_S^m\times \ZZ_S^n$.

For $A=(A_p)_{p\in S}\in \Mat_{m,n}(\QQ_S)$ and $\T=(T_p)_{p\in S}$, consider the functions
\[\begin{gathered}
\NT_{\psi,A}(\T):=\#
\left\{(\op,\oq)\in \ZZ_S^m\times \ZZ_S^n:\begin{array}{c}
\circ\:(\op,\oq)\equiv(\ov_m,\ov_n)\mod N,\;\text{and}\\[0.05in]
\hspace{-1.125in}\circ\:\text{for each }p\in S,\\
 \hspace{0.3in}\| A_p\oq+\op\|^m_p \le \psi_p(\|\oq\|_p^n);\\
 \hspace{0.23in}\|\oq\|_p^n \le T_p\end{array}\right\};\\
\VT_{\psi}(\T):=2^m\int_{\{\oy\in \QQ_S^n: \|\oy\|^n_p\le T_p\}} \prod_{p\in S} \psi_p(\|\oy\|^n_p) d\oy. 
\end{gathered}\]

Consider a sequence $(\T_r)_r$, where the index set is a subset of $\RR_{\ge0}$, such that
\[
\T_{r_2} \succeq \T_{r_1}\;\text{if}\;r_2\ge r_1
\quad\text{and}\quad
\lim_{r\rightarrow\infty}\VT_{\psi}(\T_r)=\infty.
\] 

For almost all $A\in \Mat_{m,n}(\QQ_S)$, it follows that
\[
\lim_{r\rightarrow\infty} \frac {\NT_{\psi,A}(\T_r)}{\VT_{\psi}(\T_r)/N^d}=1.
\]
\end{theorem}

%%%%%%%%%%%%%%%
Note that if $\int_{\QQ_S^n} \prod_{p\in S} \psi_p(\|\oy\|_p^n)d\oy<\infty$, the condition of \Cref{main thm 1} and also that of \Cref{main thm 2} below are not satisfied.

The reason that we can show only for the limit along increasing sequences in the above theorem is that our method needs to control the number of $\T$ for which $\VT_\psi(\T)\asymp k^\alpha$ for some $\alpha>0$, as $k\in \NN$ goes to infinity (see \Cref{Analogue of Schmidt 2}).
If approximating functions in $\psi$ decrease slowly enough, one can show that the counting function $\NT_{\psi,A}(\T)$ converges asymptotically to the volume form $\VT_{\psi}(\T)$ as $\T$ goes to infinity, for almost all $A\in \Mat_{m,n}(\QQ_S)$.

For the convenience, let us extend the variable $\T$ to have infinities, so that for $\T\in (\RR_{\ge 1}\cup \{\infty\})\times \prod_{p\in S_f} p^{\NN\cup\{\infty\}}$, one can define
\[
\VT_\psi(\T)
=2^m\int_{\{\oy\in \QQ_S^n: \|\oy\|_p^n\le T_p,\;\forall p\in S\;\text{with}\;T_p\neq \infty\}} \prod_{p\in S} \psi_p(\|\oy\|_p^n) d\oy,
\]
which possibly admidts an infinite value.

\begin{theorem}\label{main thm 2}
Under the same assumptions with \Cref{main thm 1}, suppose that there are $\delta_1,\;\alpha>0$ for which $\delta_1+1<\alpha<\delta_1+3$ and $C_\psi>0$ (also depending on $\delta_1$ and $\alpha$) such that
\[
\#\left\{\T\in (\RR_{\ge 1}\cup\{\infty\})\times \prod_{p\in S_f} p^{\NN\cup\{0,\infty\}}:
\begin{array}{c} 
\frac{\VT(\T)}{N^d}\in [k^\alpha, (k+1)^\alpha],\;\text{and}\\[0.05in]
\T\text{ is $(N,k,\alpha)$-maximal or}\\
\text{$(N,k,\alpha)$-minimal}
\end{array} \right\}< C_\psi k^{\delta_1}
\]
for any $k\in \NN$, where we say that $\T$ is \emph{$(N,k,\alpha)$-maximal ($(N,k,\alpha)$-minimal, respectively)} if
\[
\nexists \T' \text{ s.t. } \VT_\psi(\T')/N^d\in [k^\alpha, (k+1)^\alpha]
\;\text{and}\; \T' \succ \T\;(\T' \prec \T,\text{ respectively}).  
\]

For almost all $A\in \Mat_{m,n}(\QQ_S)$, it follows that
\[
\lim_{\T\rightarrow\infty} \frac {\NT_{\psi,A}(\T)}{\VT_{\psi}(\T)/N^d}=1.
\]
\end{theorem}

For instance, the collection $\psi=(\psi_p)_{p\in S}$ of approximating functions given by
\[
\psi_p(p^{nk_p})\asymp p^{-\ell_p k_p},\;\forall k_p\in \NN
\]
for some $\ell_p<n$ ($p\in S_f$) and any function $\psi_\infty$ satisfies the condition in \Cref{main thm 2}, but the case when $\ell_p=n$ could be not our example because of the absence of such a pair $(\delta_1, \alpha)$ (see volume formulas in \Cref{Volume Formula} and also explanation in \Cref{Dirichlet case}).

%%%%%%%%%%%%%%%%
%\begin{theorem}\label{main thm}
%{\color{magenta} Assume $m+n\ge 3$.}
%Let $\psi=(\psi_p)_{p\in S}$ be a collection of approximating functions.
%Let $N\in \NN_S$ and fix a pair $(\ov_m, \ov_n)\in \ZZ_S^m\times \ZZ_S^n$.
%
%For $A=(A_p)_{p\in S}\in \Mat_{m,n}(\QQ_S)$ and $\T=(T_p)_{p\in S}$, consider the functions
%\[\begin{gathered}
%\NT_{\psi,A}(\T):=\#
%\left\{(\op,\oq)\in \ZZ_S^m\times \ZZ_S^n:\begin{array}{c}
%\circ\:(\op,\oq)\equiv(\ov_m,\ov_n)\mod N,\;\text{and}\\[0.05in]
%\hspace{-1.125in}\circ\:\text{for each }p\in S,\\
% \hspace{0.3in}\| A_p\oq+\op\|^m_p \le \psi_p(\|\oq\|_p^n);\\
% \hspace{0.23in}\|\oq\|_p^n \le T_p\end{array}\right\};\\
%\VT_{\psi}(\T):=2^m\int_{\{\oy\in \QQ_S^n: \|\oy\|^n_p\le T_p\}} \prod_{p\in S} \psi_p(\|\oy\|^n_p) d\oy. 
%\end{gathered}\]
%Assume that $\VT_{\psi}(\T)\rightarrow\infty$ as $\T\rightarrow \infty$. 
%
%For almost all $A\in \Mat_{m,n}(\QQ_S)$, it follows that
%\[
%\lim_{\T\rightarrow\infty} \frac {\NT_{\psi,A}(\T)}{\VT_{\psi}(\T)/N^d}=1.
%\]
%\end{theorem}

\vspace{0.1in}
The paper is organized as follows. 
%%%
In \Cref{Volume Formula}, we compute the volume formula of the region bounded by the inequalities $\|\ox\|^m_p\le \psi_p(\|\oy\|_p^n)$ and $\|\oy\|^n_p\le T_p$ for each $p\in S$, which will be used for obtaining \Cref{main thm 1} and \Cref{main thm 2}.
%%%
In \Cref{Sec: Convergence Case}, we will show the convergent case of \Cref{New Khintchine-Groshev thm}. From the classical Borel-Cantelli lemma, one can show that the \emph{almost no} statement holds when $\sum_{\oq\in \ZZ_S^n} \prod_{p\in S} \psi_p(\|\oq\|_p^n)<\infty$, which is the same condition as in \cite{MG2009} (although they use a single function $\psi$ instead of the collection of functions $(\psi_p)_{p\in S}$). And then using the formula in \Cref{Volume Formula}, we will see that this condition is equivalent to say that $\int_{\QQ_S^n} \prod_{p\in S}\psi(\|\oy\|_p^n) d\oy<\infty$.
In \Cref{Sec: Divergence Case: The Quantitative Result}, we first show two $S$-arithmetic analogs of Schmidt's theorem \cite[Theorem 1]{Schmidt1960b} for the special case, which fit with our situations. As a consequence, we prove \Cref{main thm 1} and \Cref{main thm 2}.
In the last section, let us introduce a few possible questions to be thought of and explain technical reasons why they are not dealt with in this article.

\subsection*{Acknowledgements}
I would like to thank Anish Ghosh for valuable advice and discussion. 
The article was partially done during the author was at Tata Institute of Fundamental Research. 
I am also very grateful to an anonymous referee for many insightful comments and helpful suggestions.
This project is supported by a KIAS Individual Grant MG088401 at Korea Institute for Advanced Study. 

%%%%%%%%%%%%%%%%%%%%%%%%%%%%%%%%%%%%%%%%%%%%%%%%%%%%%%%%%%%%%%%%%%%%%%%
%%%%%%%%%%%%%%%%%%%%%%%%%%%%%%%%%%%%%%%%%%%%%%%%%%%%%%%%%%%%%%%%%%%%%%%
\section{Volume Formula}\label{Volume Formula}
In this short subsection, let us compute the volume of the following set
\begin{equation}\label{region}
E_{\psi}(\T)=\left\{(\ox,\oy)\in \QQ_S^m\times \QQ_S^n:
\begin{array}{c}
\hspace{-0.4in}\text{For each }p\in S,\\
 \hspace{0.3in}\| \ox\|^m_p \le \psi_p(\|\oy\|_p^n)\;\text{and}\\
 \hspace{0.3in}\|\oy\|^n_p \le T_p\end{array}\right\},
\end{equation}
which will be equal to $\VT_{\psi}(\T)$ defined as in \Cref{main thm 1}.

We have that $E_{\psi}(\T)=\prod_{p\in S} E_{\psi_p}(T_p)$, where
\[
E_{\psi_p}(T_p)=\left\{(\ox,\oy)\in \QQ_p^m\times \QQ_p^n :
\|\ox\|_p^m \le \psi_p(\|\oy\|_p^n)\;\text{and}\; \|\oy\|_p^n\le T_p \right\}.
\]

Hence 
\[\begin{split}
\vol\left(E_\psi(\T)\right)&=\prod_{p\in S}\int_{E_{\psi_p}(T_p)} 1 d\ox d\oy\\
&=\prod_{p\in S}\int_{\{\oy\in \QQ_p^n : \|\oy\|_p^n \le T_p \}}
\int_{\{\ox\in \QQ_p^m : \|\ox\|_p^m \le \psi_p(\|\oy\|_p^n)^{1/m}\}} 1 d\ox d\oy\\
&=\int_{\{\oy\in \RR^n: \|\oy\|_\infty^n\le T_\infty\}} 2^m\psi_\infty(\|\oy\|_\infty^n) d\oy\\
&\hspace{0.4in}\times\prod_{p\in S_f}
\int_{\{\oy\in \QQ_p^n: \|\oy\|_p^n \le T_\infty\}} \psi_p(\|\oy\|_p^n) d\oy\\
&=2^m \int_{\{\oy\in \QQ_S^n: \|\oy\|_p^n\le T_p,\; \forall p\in S\}} \prod_{p\in S} \psi_p(\|\oy\|_p^n) d\oy=\VT_{\psi}(\T).
\end{split}\]

For the next section, let us note that the inner integrals above can be expressed as
\begin{equation}\label{another formula}
\begin{gathered}
\int_{\{\oy\in \RR^n : \|\oy\|_\infty^n \le T_\infty\}} \psi_\infty(\|\oy\|_\infty^n) d\oy
=2^n\int_0^{T_\infty} \psi_\infty(r) dr;\\
\int_{\{\oy\in \QQ_p^n : \|\oy\|_p^n \le T_p\}} \psi_p(\|\oy\|_p^n) d\oy=\sum_{k_p=-\infty}^{t_p} p^{k_pn}\left(1 -\frac 1 {p^n}\right) \psi_p(p^{k_pn}),
\end{gathered}
\end{equation}
where $T_p=p^{t_pn}$ for $p\in S_f$.

%%%%%%%%%%%%%%%%%%%%%%%%%%%%%%%%%%%%%%%%%%%%%%%%%%%%%%%%%%%%%%%%%%%%%%%
%%%%%%%%%%%%%%%%%%%%%%%%%%%%%%%%%%%%%%%%%%%%%%%%%%%%%%%%%%%%%%%%%%%%%%%
\section{Convergent Case}\label{Sec: Convergence Case}

For the convergent part, we may assume that $N=1$ since for any $A=(A_p)_{p\in S}\in \Mat_{m,n}(\QQ_S)$, there is an inclusion between the solution sets
\[\begin{split}
&\left\{(\op,\oq)\in \ZZ_S^m\times \ZZ_S^n: \begin{array}{c}
\|A_p\oq+\op\|^m_p\le \psi_p(\|\oq\|_p^n),\;\forall p\in S\\
(\op,\oq)=(\ov_m,\ov_n)\mod N \end{array}\right\}\\
&\hspace{0.4in}\subseteq
\left\{(\op,\oq)\in \ZZ_S^m\times \ZZ_S^n: \|A_p\oq+\op\|^m_p\le \psi_p(\|\oq\|_p^n),\;\forall p\in S\right\}
\end{split}\]
for any $N\in \NN_S$ and $(\ov_m, \ov_n)\in \ZZ_S^m\times \ZZ_S^n$.

\begin{proof}[Proof of \Cref{New Khintchine-Groshev thm} (1)]
Using the fact that $\QQ_S/\ZZ_S\simeq [0,1)\times \prod_{p\in S_f} \ZZ_p$, it is enough to show that the following set
\[
\mathcal A_{m,n}(\psi)
=\left\{ A\in \Mat_{m,n}\Big([0,1)\times \prod_{p\in S_f} \ZZ_p\Big): \begin{array}{c}
\|A_p\oq + \op\|_p^m \le \psi_p(\|\oq\|_p^n),\;\forall p\in S\\[0.05in]
\text{for infinitely many}\\[0.05in]
 (\op,\oq)\in \ZZ_S^m\times \ZZ_S^n \end{array}\right\}
\]
has measure zero when $\int_{\QQ_S^n} \prod_{p\in S} \psi_p(\|\oy\|_p^n) d\oy<\infty$.

For each $\oq\in \ZZ_S^n$, define $\mathcal A_{\oq}=\mathcal A_{\oq}(\psi)$ by
\[
\mathcal A_{\oq}=\left\{A\in \Mat_{m,n}\Big([0,1)\times \prod_{p\in S_f} \ZZ_p\Big) : 
\begin{array}{c}
\|A_p\oq + \op\|_p^m \le \psi_p(\|\oq\|_p^n),\; \forall p\in S \\[0.05in]
\text{for some}\; \op\in \ZZ_S^m \end{array}\right\}.
\]
It is easy to verify that $\mathcal A_{m,n}(\psi)\subseteq \limsup_{\oq} \mathcal A_{\oq}$. Hence to use the Borel-Cantelli lemma, we need to show that
\[
\sum_{\oq\in \ZZ_S^n-\{\origin\}} \vol(\mathcal A_{\oq}) < \infty.
\]

We first want to count the number of $\oq=(\oq)_{p\in S}\in \ZZ_S^n$ for which $\|\oq\|_p=T_p$ for any $p\in S$, when $\T=(T_p)_{p\in S}$ is given.
Denote
\begin{equation}\label{Labeling}
T_\infty=\ell_{\infty}p_1^{\ell_1}\cdots p_s^{\ell_s}
\quad\text{and}\quad 
T_{p_i}={p_i}^{k_i},
\end{equation}
where $\ell_\infty\in \NN_S$ and $\ell_i, \;k_i\in \ZZ$ for $1\le i \le s$.
Any possible $\oq=(q_1, \ldots, q_n)$ is 

\noindent 1) in $\frac 1 {p_1^{k_1}\cdots p_s^{k_s}} \ZZ^n \cap B_{\ell_\infty p_1^{\ell_1} \cdots p_s^{\ell_s}} (\origin)$, where 
$$B_{\ell_\infty p_1^{\ell_1} \cdots p_s^{\ell_s}} (\origin)=[-\ell_\infty p_1^{\ell_1} \cdots p_s^{\ell_s},\ell_\infty p_1^{\ell_1} \cdots p_s^{\ell_s}]^n;$$

\noindent 2) for some $1\le j\le n$, $q_j=\pm \ell_\infty p_1^{\ell_1} \cdots p_s^{\ell_s}$.

Let us assume that $|q_1|_\infty=T_\infty=\ell_\infty p_1^{\ell_1} \cdots p_s^{\ell_s}$.
From the condition that
$|q_1|_{p_i}=p_i^{-\ell_i} \le \|\oq\|_{p_i}=p_i^{k_i}$,
we further obtain the condition for $\T$:
\begin{equation}\label{cond for T}
\ell_i \ge -k_i\quad\text{for any}\; 1\le i \le s.
\end{equation}

When $-\ell_j=k_j$ for all $1\le j\le s$, i.e., if $|q_1|_p=T_p$ for each $p\in S_f$, then the number of $\oq\in \ZZ_S^n$ for which $(\|\oq\|_p)_{p\in S}=\T$ is 
\begin{equation}\label{no of oq}
2n(2\prod_{p\in S}T_p+1)^{n-1}.
\end{equation}
This quantity turns out to be the case of having the largest upper bound, 
since if $|q_1|_{p_j}\neq T_{p_j}$ for some $1\le j\le s$, then $|q_i|_{p_j}=p_j^{k_j}$ for some $2\le i\le n$, hence the additional condition that $q_i=m_i p_j^{k_j}$ with $(m_i, p_j)=1$ reduces the number of such $\oq$'s.

\vspace{0.1in}
Next, let us compute the upper bound of $\vol(\mathcal A_\oq)$ for a given $\oq\in \ZZ_S^n$ with $(\|\oq\|_p)_{p\in S}=\T$, which is equal to $\vol(\mathcal X_\oq)^m$, where
\[\begin{split}
\mathcal X_\oq
&=\left\{\begin{array}{lr}
X=(x_1, \ldots, x_n)\in \Big([0,1)\times \prod_{p\in S_f} \ZZ_p\Big)^n : \\[0.05in]
\;\exists b\in \ZZ_S\;\text{s.t. }|(x_1q_1+ \cdots + x_nq_n)+b|_p \le \psi_p(\|\oq\|_p^n)^{1/m},\;\forall p\in S\;\end{array} \right\}\\
&=\bigcup_{b\in \ZZ_S} \mathcal X_{\oq, b},
\end{split}\]
and $\mathcal X_{\oq, b}$ is the set of $X=(x_1,\ldots, x_n)\in \mathcal X_\oq$ for which $|(x_1q_1+ \cdots + x_nq_n)+b|_p \le \psi_p(\|\oq\|_p^n)^{1/m}$ for $p\in S$.

Note that since $\|(x_1, \ldots, x_n)\|_p\le 1$ and $0\le \psi_p\le 1$ for each $p\in S$, the element $b\in \ZZ_S$ such that $\mathcal X_{\oq,b}\neq \emptyset$ satisfies that
\[\begin{gathered}
|b|_\infty \le n\|\oq\|_\infty+1
\le 2n\max\{\|\oq\|_\infty, 1\};\\[0.05in]
|b|_p\le \max\{\|\oq\|_p, \psi_p(\|\oq\|_p^n)^{1/m}\}
\le \max\{\|\oq\|_p, 1\}
\end{gathered}\]
for $p\in S_f$. Hence, we obtain the upper bound
\[
\# \{ b\in \ZZ_S : \mathcal X_{\oq, b}\neq \emptyset\}
\le 2n\prod_{p\in S} \max \{\|\oq\|_p, 1\}.
\]

The volume $\vol(\mathcal X_{\oq, b})$ is the product of
\[\begin{gathered}
\vol_\infty\Big(\Big\{(x_1,\ldots, x_n)\in [0,1)^n:
\Big|\sum_{i=1}^n x_iq_i+b\Big|_\infty \le \psi_\infty(\|\oq\|_p^n)^{1/m}\Big\}\Big) \quad\text{and}\\
\vol_p\Big(\Big\{(x_1, \ldots, x_n)\in \ZZ_p^n:
\Big|\sum_{i=1}^n x_iq_i+b\Big|_p \le \psi_p(\|\oq\|_p^n)^{1/m}\Big\}\Big)
\end{gathered}\]
for all $p\in S_f$. 
%%%
It is well-known that an upper bound of the volume above for the infinite place can be taken to be $2\psi_\infty(\|\oq\|_\infty^n)^{1/m}/\|\oq\|_\infty$ if $\|\oq\|_\infty >1$.
When $\|\oq\|_\infty \le 1$, we are not interested in the volume of the the given set and will take an upper bound as 1 (which is the volume of $[0,1)^n$).

For the case when $p<\infty$, let $|q_{i_0}|_p=\|\oq\|_p$ for some $1\le i_0\le n$. 
Then we have that
\[
x_{i_0} \in \ZZ_p \cap \Big(\psi_p(\|\oq\|_p^n)^{-1/m}q_{i_0}^{-1}\ZZ_p  -b- q_{i_0}^{-1}\sum_{1\le i\neq i_0\le n}x_iq_i\Big)
\]
so that if $\|\oq\|_p>1$, the volume is bounded above by
\[\begin{split}
&\int_{\ZZ_p^{n-1}} \int_{\psi_p(\|\oq\|_p^n)^{-1/m}q_{i_0}^{-1}\ZZ_p  -b- q_{i_0}^{-1}\sum\limits_{1\le i\neq i_0\le n}x_iq_i} 1 dx_{i_0} dx_1\cdots dx_{i_0-1}dx_{i_0+1}\cdots dx_{n}\\
&\hspace{2.in}=\psi_p(\|\oq\|_p^n)^{1/m}| q^{-1}_{i_0}|_p
=\psi_p(\|\oq\|_p^n)^{1/m}/\|\oq\|_p,
\end{split}\]
and if $\|\oq\|_p\le 1$, as in the case of the infinite place, we will give the upper bound of the volume as 1.

Therefore, we have the following upper bound of the volume of $\mathcal X_{\oq}$:
\begin{equation}\label{vol of Xq}\begin{split}
\vol(\mathcal X_\oq)
&\le 2n \prod_{p\in S}\max\{\|\oq\|_p,1\} 
\times
\prod_{p\in S} \frac{\psi_p(\|\oq\|_p^n)^{1/m}}{\max\{\|\oq\|_p, 1\}}\\
%%%
&=2n \prod_{p\in S} \psi_p(\|\oq\|_p^n)^{1/m}.
\end{split}\end{equation}

By Equations \eqref{cond for T}, \eqref{no of oq}, and \eqref{vol of Xq}, using the notation \Cref{Labeling} for $(T_p)_{p\in S}$, we obtain that
\[\begin{split}
\sum_{\oq\in \ZZ_S^n-\{\origin\}} \vol(\mathcal A_\oq)
&\le\sum_{k_1\in \ZZ}\cdots \sum_{k_s\in \ZZ}
\hspace{-0.1in}\sum_{\scriptsize \begin{array}{c}
\ell_j\ge -k_j\\
(1\le j\le s);\\
\ell_\infty\in \NN_S\end{array}}\hspace{-0.15in}
(2n)^{m+1}\Big(2\prod_{p\in S}T_p + 1\Big)^{n-1}\prod_{p\in S} \psi_p(T_p^n).
\end{split}\]
%%%
Now, since $|T_\infty|_p\le T_p$ for each $p\in S_f$, $\prod_{p\in S}T_p \ge \prod_{p\in S} |T_\infty|_p=\ell_\infty \ge 1$ (by considering $T_\infty$, $T_{p_1}, \ldots, T_{p_s}$ as rational numbers), so that it suffices to show that
\begin{equation}\label{vol form 1}
\sum_{k_1\in \ZZ}\cdots \sum_{k_s\in \ZZ}
\hspace{-0.1in}\sum_{\scriptsize \begin{array}{c}
\ell_j\ge -k_j\\
(1\le j\le s);\\
\ell_\infty\in \NN_S\end{array}}
\Big(\prod_{p\in S} T_p\Big)^{n-1} \prod_{p\in S}\psi_p(T_p^n)
<\infty.
\end{equation}

For each $k_1,\ldots, k_s\in \ZZ$, let us take $K=\prod_{p\in S_f} T_p=\prod_{1\le i \le s} p_i^{k_i}$ for a while. The inner sum over $T_\infty=\ell_\infty p_1^{\ell_1}\cdots p_s^{\ell_s}$ in \Cref{vol form 1} is then
\[\begin{split}
\sum_{\scriptsize \begin{array}{c}
\ell_j\ge -k_j\\
(1\le j\le s);\\
\ell_\infty\in \NN_S\end{array}} T_\infty^{n-1} \psi_{\infty}(T_\infty^n)
%%%
&=\sum_{T_\infty\in \frac 1 {K}\NN}
T_\infty^{n-1}\psi_\infty(T_\infty^n)
%%%
=\sum_{m\in \NN} \Big(\frac m {K}\Big)^{n-1}\psi_\infty\left(\Big(\frac m {K}\Big)^n\right)\\
%%%
&\ll_{n} \sum_{m\in \NN} \frac 1 {K^{n-1}} \psi_{\infty} \left(\frac m {K^n}\right)
%%%
={K} \sum_{m\in \NN} \frac 1 {K^n} \psi_\infty \left(\frac m {K^n}\right)\\
%%%
&\le (T_1\cdots T_s) \int_0^\infty \psi_\infty(T_\infty) dT_\infty,
\end{split}\] 
where in the last inequality, we use that $\psi_\infty$ is non-increasing.

Hence the summation in \Cref{vol form 1} is bounded by
\[\begin{split}
&\ll_{n} \prod_{p\in S_f} \Big(\sum_{k_p\in \ZZ} p^{nk_p} \psi_p(T_p^n)\Big)
\times
\int_0^\infty \psi_\infty(T_\infty) dT_\infty \\
%%%
&\ll_{n,S}\int_{\QQ_S^n} \prod_{p\in S} \psi_p(\|\oy\|_p^n) d\oy
 <\infty
\end{split}\]
by using \Cref{another formula} and the fact that the integral of $\prod_{p\in S} \psi_p$ is finite. Therefore we conclude that $\vol(\mathcal A_{m,n}(\psi))=0$ by the Borel-Cantelli lemma.
\end{proof}

%%%%%%%%%%%%%%%%%%%%%%%%%%%%%%%%%%%%%%%%%%%%%%%%%%%%%%%%%%%%%%%%%%%%%%%
%%%%%%%%%%%%%%%%%%%%%%%%%%%%%%%%%%%%%%%%%%%%%%%%%%%%%%%%%%%%%%%%%%%%%%%
\section{Divergence Case: The Quantitative Result}\label{Sec: Divergence Case: The Quantitative Result}

% reduction %%%%%%%%
We want an asymptotic formula for 
\[\begin{split}
\NT_{\psi,A}(\T)
&=\#
\left\{(\op,\oq)\in \ZZ_S^m\times \ZZ_S^n:\begin{array}{c}
\circ\:(\op,\oq)\equiv(\ov_m,\ov_n)\mod N,\;\text{and}\\[0.05in]
\hspace{-1.125in}\circ\:\text{for each }p\in S,\\
 \hspace{0.3in}\| A_p\oq+\op\|^m_p \le \psi_p(\|\oq\|_p^n);\\
 \hspace{0.23in}\|\oq\|_p^n \le T_p\end{array}\right\}\\
 %%%
&=\#\su_A\left(N\ZZ_S^d+(\ov_m,\ov_n)\right) \cap E_\psi(\T),
\end{split}\]
where $\su_A:=\left(\begin{array}{cc}
I_m & A \\
0 & I_n \end{array}\right)$, $\psi=(\psi_p)_{p\in S}$ is a collection of approximating functions, $N\in \NN_S$, $(\ov_m, \ov_n)\in \ZZ_S^m\times \ZZ_S^n$, and $E_{\psi}(\T)$ is the set given in \Cref{region}, when the integral of $\prod_{p\in S}\psi_p$ diverges. Set $\T'=(T'_p)_{p\in S}$ and $\psi'=(\psi'_p)_{p\in S}$ such that
\begin{equation}\label{psi'}
\left\{\begin{array}{c}
T'_\infty= T_\infty/N^n; \\
T'_p=T_p\;(p\in S_f);
\end{array}\right.
\quad
\left\{\begin{array}{c}
\psi'_\infty(\|\oy\|_\infty^n):= \psi_\infty(\|N\oy\|_\infty^n)/N^m;\\
\psi'_p=\psi_p\;(p\in S_f)
\end{array}\right.
\end{equation}
so that it holds that
\[
\#\su_A\left(N\ZZ_S^d+(\ov_m,\ov_n)\right) \cap E_\psi(\T)
=\# \su_A\left(\ZZ_S^d+ \frac 1 N (\ov_m, \ov_n)\right) \cap E_{\psi'}(\T').
\]

Put $\ov_d=(\ov_m, \ov_n)$ and define
\[
Y_{\ov_d/N}=\left\{\sg\left(\ZZ_S^d + \frac {\ov_d} {N}\right) : \sg \in \SL_d(\QQ_S) \right\},
\]
where $\SL_d(\QQ_S)=\prod_{p\in S}\SL_d(\QQ_p)$. Let $\mu_S$ be the Haar measure on $\SL_d(\QQ_S)$ for which $\mu_S(Y_{\ov_d/N})=1$, where we use the same notation $\mu_S$ for the push-forward measure of $\mu_S$ under the projection $\SL_d(\QQ_S)\rightarrow Y_{\ov_d/N}$.

%%%
We first examine the following asymptotic formulas for random affine lattices in $Y_{\ov_d/N}$, which are generalizations of the Schmidt's theorem \cite{Schmidt1960b}. 

%%%%%%%%%%%%%%%%%%%%%%%%%%%%%%%%%%%%%%%%%%%%%
\begin{theorem}\label{Analogue of Schmidt 1}
Let $\psi'=(\psi'_p)_{p\in S}$ be as in \Cref{psi'}
and take any $\delta\in \left(\frac 2 3,1\right)$. 
Consider a sequence $(\T_r)_{r}$, where the index set is a subset of $\RR_{\ge 0}$, such that 
\[
\T_{r_2}\succeq \T_{r_1}\;\text{if}\;r_2\ge r_1
\quad\text{and}\quad 
\lim_{r\rightarrow\infty} \vol(E_{\psi'}(\T_r))= \infty.
\]

Then the sequence $(\T'_r)_r$, which is defined from $(\T_r)_r$ as in \Cref{psi'}, also has the same properties above.

For a.e. $\sg\in \SL_d(\QQ_S)$, it holds that
\[
\#\sg\left(\ZZ_S^d+\frac {\ov_d} N\right) \cap E_{\psi'}(\T'_r) = \vol(E_{\psi'}(\T'_r)) + O(\vol(E_{\psi'}(\T'_r))^{\delta})
\]
for all sufficiently large $r$ (depending on $\sg$). Hence it follows that for a.e. $\sg\in \SL_d(\QQ_S)$, 
\[
\#\sg\left(N\ZZ_S^d+\ov_d\right)\cap E_{\psi}(\T_r)
=\frac 1 {N^d} \vol\left(E_\psi(\T_r)\right)+O_N(\vol\left(E_{\psi}(\T_r)\right)^\delta)
\]
for all sufficiently large $r$.
\end{theorem}

For the next theorem, let us allow the infinite value for $T_p$ ($p\in S$), and define
\[
E_{\psi'_p}(\infty):=\left\{(\ox,\oy)\in \QQ_p^m\times \QQ_p^n :
\|\ox\|_p^m \le \psi'_p(\|\oy\|_p^n) \right\}.
\]
This set will be considered only when $\vol_p(E_{\psi'_p}(\infty))<\infty$.

\begin{theorem}\label{Analogue of Schmidt 2}
Let $\psi'=(\psi'_p)_{p\in S}$ and $\T'=(T'_p)_{p\in S}$ be as in \Cref{psi'}.
Suppose that there are $(\delta_1, \delta, \alpha)\in \left(0,\infty\right)\times \left(\frac {\delta_1+2} {\delta_1+3}, 1\right)\times \left(\frac {1+\delta_1} {2\delta-1}, \frac 1 {1-\delta}\right)$ and $C_{\psi'}>0$ such that
\[\begin{split}
&\#\left\{\T'\in (\RR_{\ge \frac 1 {N^n}}\cup\{\infty\})\times\prod_{p\in S_f} p^{\NN\cup\{0,\infty\}}:
\begin{array}{c}
\vol(E_{\psi'}(\T'))\in [k^\alpha, (k+1)^\alpha],\;\text{and}\\
E_{\psi'}(\T')\;\text{is $(k,\alpha)$-maximal or}\\
 \text{$(k,\alpha)$-minimal}
\end{array} \right\}\\
&\hspace{0.2in}< C_{\psi'} k^{\delta_1}
\end{split}\]
for any $k\in \NN$, where $E_{\psi'}(\T')$ is \emph{$(k,\alpha)$-maximal ($(k,\alpha)$-minimal, respectively)} if 
\[\begin{split}
\nexists \T'' \text{ s.t. }\vol(E_{\psi'}(\T''))\in [k^\alpha, (k+1)^\alpha]& \text{ with }E_{\psi'}(\T')\subsetneq E_{\psi'}(\T'')\\
(&E_{\psi'}(\T')\supsetneq E_{\psi'}(\T''),\text{ respectively}).
\end{split}\]

For a.e. $\sg\in \SL_d(\QQ_S)$, we have that
\[
\#\sg\left(\ZZ_S^d+\frac {\ov_d} N\right) \cap E_{\psi'}(\T') = \vol(E_{\psi'}(\T')) + O(\vol(E_{\psi'}(\T'))^{\delta})
\]
for all sufficiently large $\T'$, hence, for all sufficiently large $\T$, it holds that
\[
\#\sg\left(N\ZZ_S^d+\ov_d\right)\cap E_{\psi}(\T)
=\frac 1 {N^d} \vol\left(E_\psi(\T)\right)+O_N(\vol\left(E_{\psi}(\T)\right)^\delta).
\]
\end{theorem}

%
%\begin{theorem}\label{Analogue of Schmidt}
%Let $\psi'=(\psi'_p)_{p\in S}$ and $\T'=(T'_p)_{p\in S}$ be as above.
%Let $\delta\in \left(\frac 2 3,1\right)$ be given. For a.e. $\sg\in \SL_d(\QQ_S)$, it holds that
%\[
%\#\sg\left(\ZZ_S^d+\frac {\ov_d} N\right) \cap E_{\psi'}(\T') = \vol(E_{\psi'}(\T')) + O(\vol(E_{\psi'}(\T'))^{\delta})
%\]
%for all sufficiently large $\T'$. Hence it follows that for a.e. $\sg\in \SL_d(\QQ_S)$, 
%\[
%\#\sg\left(N\ZZ_S^d+\ov_d\right)\cap E_{\psi}(\T)
%=\frac 1 {N^d} \vol\left(E_\psi(\T)\right)+O_N(\vol\left(E_{\psi}(\T)\right)^\delta)
%\]
%for all sufficiently large $\T$.
%\end{theorem}

Notice that for any positive $\delta_1, \alpha>0$ with $\delta_1+1<\alpha<\delta_1+3$, any $\delta\in \left(\max\left\{1-\frac 1 \alpha, \frac 1 2(\frac {1+\delta_1} \alpha +1), \frac {\delta_1+2} {\delta_1+3}\right\},1\right)$ satisfies that $(\delta_1, \delta, \alpha)\in  \left(0,\infty\right)\times \left(\frac {\delta_1+2} {\delta_1+3}, 1\right)\times \left(\frac {1+\delta_1} {2\delta-1}, \frac 1 {1-\delta}\right)$.

To prove the theorems above under our strategy, we need followings.

\begin{theorem}\label{GH22 Theorem 5.2}
Let $d\ge 3$ and let $E=\prod_{p\in S} E_p$ be the product of Borel sets $E_p\subseteq \QQ_p^d$ for $p\in S$ with $\vol(E)<\infty$.
There is a constant $C_d>0$, depending only on the dimension $d$, such that
\[
\mu_S\left(\left\{\Lambda\in Y_{\ov_d/N} : \left|\#(\Lambda \cap E) - \vol(E) \right|>M\right\}\right)< C_d \frac {\vol(E)} {M^2}
\]
for any positive $M>0$.
\end{theorem}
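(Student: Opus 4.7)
My plan is to establish this variance bound via a second moment computation followed by Chebyshev's inequality. Writing $f = \mathbf{1}_E$ and the Siegel transform $\widehat f(\Lambda) = \#(\Lambda \cap E) = \sum_{\ov \in \Lambda} f(\ov)$, the task reduces to proving the two identities
\[
\int_{Y_{\ov_d/N}} \widehat f\, d\mu_S = \vol(E)
\quad\text{and}\quad
\int_{Y_{\ov_d/N}} \widehat f^{\,2}\, d\mu_S = \vol(E)^2 + O_d(\vol(E)),
\]
from which Chebyshev's inequality applied to $(\widehat f - \vol(E))^2$ immediately delivers the claimed bound with a constant $C_d$ depending only on the dimension $d$.

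\textbf{First moment.} The first identity is an affine, $S$-arithmetic analog of Siegel's mean value theorem. Since $f = \prod_{p\in S} \mathbf{1}_{E_p}$ factors as a product of local indicator functions, the standard unfolding argument for the Siegel transform on $\SL_d(\QQ_S)/\SL_d(\ZZ_S)$ splits across the places of $S$ and reduces to a local computation on each $\QQ_p^d$, where the integral against the Haar measure of $\SL_d(\QQ_p)$ against indicator sums unfolds to $\int_{\QQ_p^d} \mathbf{1}_{E_p}\,d\vol_p$. Passage from the linear space $\SL_d(\QQ_S)/\SL_d(\ZZ_S)$ to the affine orbit $Y_{\ov_d/N}$ is routine, since $\SL_d(\QQ_S)$ acts transitively on primitive cosets of $N\ZZ_S^d$ and thus the first moment is independent of the representative $\ov_d$ and equals $\vol(E)$.

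\textbf{Second moment and main obstacle.} For the variance, expand
\[
\widehat f^{\,2}(\Lambda) = \sum_{\ov \in \Lambda} f(\ov)^2 + \sum_{\substack{\ov_1,\ov_2\in \Lambda \\ \ov_1 \neq \ov_2}} f(\ov_1) f(\ov_2).
\]
The diagonal contributes exactly $\vol(E)$ by the first moment applied to $f^2=f$. The technical heart is the off-diagonal term, which requires an affine $S$-arithmetic Rogers-type second moment formula
\[
\int_{Y_{\ov_d/N}} \sum_{\ov_1\neq \ov_2 \in \Lambda} f(\ov_1) f(\ov_2)\, d\mu_S = \vol(E)^2 + O_d(\vol(E)),
\]
and this is where the dimension hypothesis $d \geq 3$ is essential: it precisely ensures convergence of the sums over $\SL_d(\QQ_S)$-orbits of pairs $(\ov_1,\ov_2)$ that appear in the orbit decomposition of the off-diagonal integral. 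The main obstacle is carrying out this orbit analysis in the affine $S$-arithmetic setting, tracking how the shift $\tfrac{1}{N}\ov_d$ interacts with the integral structure at each finite place and checking that the local $p$-adic orbit contributions combine correctly via the product Haar measure on $\SL_d(\QQ_S)$. The product structure $E = \prod_{p\in S} E_p$ is exactly what makes this feasible: it reduces the global second moment integral to a product of local $p$-adic Rogers-type integrals that can be analyzed place by place and then multiplied. Once both moment identities are in hand, Chebyshev's inequality completes the proof.
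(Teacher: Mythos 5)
This theorem is not proved in the paper at hand---it is imported verbatim from \cite{GH22}, so there is no in-paper proof to compare against. Your sketch is, however, the correct route and matches the strategy used in the cited source: prove an $S$-arithmetic affine analog of Siegel's mean value theorem and a Rogers-type second moment formula, then conclude via Chebyshev.

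Two points deserve a little more care than the sketch gives them. First, the first-moment identity $\int_{Y_{\ov_d/N}}\widehat f\,d\mu_S=\vol(E)$ is clean only when the shift $\ov_d/N$ is genuinely non-integral; when $N=1$, or more generally when $\ov_d\in N\ZZ_S^d$, the lattice $\ZZ_S^d+\ov_d/N$ contains the origin and the mean picks up an additive correction $f(0)\le 1$. This is $O(1)$ and so is easily absorbed into the $O_d(\vol(E))$ of the second moment, but Chebyshev is then applied to the deviation from the shifted mean $\vol(E)+f(0)$ and one must argue (trivially) that deviation from $\vol(E)$ is controlled up to an enlarged constant. Second, your claim that the product structure $E=\prod_p E_p$ ``reduces the global second moment integral to a product of local $p$-adic Rogers-type integrals'' is a bit quick: the orbit decomposition in the Rogers formula is over $\SL_d(\QQ_S)$-orbits of \emph{pairs} of vectors in $N\ZZ_S^d+\ov_d$, which are parametrized by $S$-integral, not place-by-place, data (e.g.\ common denominators and $\gcd$ conditions in $\ZZ_S$). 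What factorizes is the integral over each fixed orbit, because the orbit measure is a product measure and $f$ is a product of local indicators; the sum over orbits is a genuinely global arithmetic sum, and it is convergence of exactly that sum which requires $d\ge 3$ (this covers the unimodular case $N=1$ where the classical Rogers obstruction at $d=2$ appears). With those clarifications, the proposal correctly identifies both the strategy and the role of the hypothesis $d\ge 3$.
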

\begin{proof} For a bounded set $E=\prod_{p\in S_f} E_p$, the result follows from \cite[Theorem 5.2]{GH22}. 

If $E$ is unbounded, one can take a sequence $(E_k=\prod_{p\in S}E^{(k)}_p)_{k\in \NN}$ converging to $E$. Since
\[\begin{split}
&\left\{\Lambda\in Y_{\ov_d/N} : \left|\#(\Lambda \cap E) - \vol(E) \right|>M\right\}\\
&\hspace{1in}=\liminf_{\scriptsize \begin{array}{c}
k\rightarrow\infty\\
\exists E^{\pm}(k)
\end{array}}
\left\{\Lambda\in Y_{\ov_d/N} : \left|\#(\Lambda \cap E_k) - \vol(E_k) \right|>M\right\},
\end{split}\]
the theorem is deduced from Fatou's lemma. 
\end{proof}

It is unclear that for a given non-increasing function $\psi_0:\RR_{>0}\rightarrow \RR_{> 0}$, there is a constant $C>0$ such that
\[
\mu_S\left(\left\{\Lambda \in Y_{\ov_d/N} : |\#(\Lambda\cap E_{\psi_0}(T))-\vol(\mathsf E_{\psi_0}(T))|>M\right\}\right) < C \frac {\vol(\mathsf E_{\psi_0}(T))} {M^2}
\]
for all sufficiently large $T>1$, where 
\[
\mathsf E_{\psi_0}(T):=\left\{(\ox,\oy)\in \QQ_S^m\times \QQ_S^n : \|\ox\|_S^m \le \psi_0(\|\oy\|_S^n)\;\text{and}\;\|\oy\|_S^n \le T \right\}.
\]
Unlike the set $\mathsf E_{\psi_0}(T)$, the set $E_\psi(\T)$ is the product of Borel sets in $\QQ_p^d$ for $p\in S$ so that \Cref{GH22 Theorem 5.2} can be applicable. This is the reason of considering a new type of an $S$-arithmetic Khintchine-Groshev theorem. 

\vspace{0.1in}
For a discrete set $\Lambda\subseteq \QQ_S^d$ and a measurable set $E\subseteq \QQ_S^d$ with finite volume, define 
\[ D(\Lambda, E)=|\# (\Lambda \cap E) -\vol(E)|.\]

The following lemma is easy to obtain.

\begin{lemma}\label{discrepency}
Let $E_1 \subseteq E \subseteq E_2 \subseteq \QQ_S^d$ be measurable sets with finite volume. Then
\[
D(\Lambda, E)+\vol(E_2- E_1) \le \max\left\{D(\Lambda, E_1), D(\Lambda, E_2)\right\}
\]
for any discrete set $\Lambda\subseteq \QQ_S^d$.
\end{lemma}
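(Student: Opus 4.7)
The plan is to reduce the claim to a real-variable comparison of signed deviations. Put
\[
a \;=\; \#(\Lambda\cap E)-\vol(E), \qquad a_i \;=\; \#(\Lambda\cap E_i)-\vol(E_i) \quad (i=1,2),
\]
and set $v_1 = \vol(E)-\vol(E_1)\ge 0$ and $v_2 = \vol(E_2)-\vol(E)\ge 0$, so $v_1+v_2 = \vol(E_2\setminus E_1)$. The only structural input is $E_1\subseteq E\subseteq E_2$, which by monotonicity of $F\mapsto \#(\Lambda\cap F)$ and of $\vol(\cdot)$ on inclusions yields the sandwich
\[
a_1 - v_1 \;\le\; a \;\le\; a_2 + v_2.
\]

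Next I would convert this signed estimate into information on $|a|$ by splitting on the sign of $a$. When $a\ge 0$, the right half of the sandwich gives $|a|\le a_2+v_2\le |a_2|+v_2$; when $a<0$, the left half gives $|a|\le -a_1+v_1\le |a_1|+v_1$. Taking the worst case produces the natural one-sided bound
\[
D(\Lambda,E) \;\le\; \max\{D(\Lambda,E_1),D(\Lambda,E_2)\} + \vol(E_2\setminus E_1),
\]
which is the standard sandwich output.

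The hard part, and the step I expect to be the main obstacle, is that this natural bound places $\vol(E_2\setminus E_1)$ on the opposite side of the inequality from where the lemma places it. To obtain the stated form, I would exploit the discrete structure of $\Lambda$ through the identity
\[
\#\bigl(\Lambda\cap(E_2\setminus E_1)\bigr) \;=\; (a_2 - a_1) + \bigl(\vol(E_2) - \vol(E_1)\bigr),
\]
which couples $a_1$ and $a_2$ via $\vol(E_2\setminus E_1)$: the non-negativity of the left-hand side forces $a_2 - a_1 \ge -\vol(E_2\setminus E_1)$, so at least one of $D(\Lambda, E_i)$ must absorb a contribution of order $\vol(E_2\setminus E_1)$ whenever the two signed deviations disagree. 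The plan is to combine this forced inflation with the sign-split above, case by case on the signs of $a_1, a, a_2$, in order to transfer $\vol(E_2\setminus E_1)$ from the right-hand side to the left-hand side of the natural sandwich bound. Once this coupling is made quantitative, the resulting rearrangement yields the lemma as stated. In the application to \Cref{main thm}, $E_1$ and $E_2$ will be chosen to sandwich $E_\psi(\T)$ so that $\vol(E_2\setminus E_1)$ is small compared with $\vol(E_\psi(\T))^\delta$, and the lemma will be used to pass the discrepancy control provided by \Cref{GH22 Theorem 5.2} from product boxes to the slightly non-product region $E_\psi(\T)$.
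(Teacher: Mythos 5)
Your opening derivation is correct: writing $a=\#(\Lambda\cap E)-\vol(E)$, $a_i=\#(\Lambda\cap E_i)-\vol(E_i)$, $v_1=\vol(E)-\vol(E_1)$, $v_2=\vol(E_2)-\vol(E)$, the monotonicity of $F\mapsto\#(\Lambda\cap F)$ and of $\vol$ gives $a_1-v_1\le a\le a_2+v_2$, and the sign-split gives the clean
\[
D(\Lambda,E)\le \max\{D(\Lambda,E_1),D(\Lambda,E_2)\}+\vol(E_2\setminus E_1).
\]

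The gap is in the "hard part" you flag. The inequality as printed in the lemma, with $\vol(E_2\setminus E_1)$ added on the \emph{left}, is not merely a harder version of the sandwich bound — it is false, and no amount of exploiting discreteness can recover it. Take $\Lambda$ any discrete set disjoint from $E_2$ (for instance $\Lambda=\emptyset$, or a shifted copy of $\ZZ_S^d$ far from $E_2$), and take $E_1=\emptyset$, $E=E_2$ a set of positive finite volume $v$. Then $D(\Lambda,E)=v$, $\vol(E_2\setminus E_1)=v$, $D(\Lambda,E_1)=0$, $D(\Lambda,E_2)=v$, so the left side is $2v$ while the right side is $v$. Your coupling identity $\#\bigl(\Lambda\cap(E_2\setminus E_1)\bigr)=(a_2-a_1)+\vol(E_2\setminus E_1)$ is correct and does force $a_2-a_1\ge -\vol(E_2\setminus E_1)$, but this constraint is exactly saturated in the counterexample ($a_1=0$, $a_2=-v$), so the "forced inflation" you hope for is present and still does not close the $2v$ versus $v$ gap. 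The case analysis you propose therefore cannot terminate in the stated conclusion.

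What is actually needed (and what makes the application in the proof of \Cref{Analogue of Schmidt} work, reading it as $\max\{D(\Lambda,E_1),D(\Lambda,E_2)\}\ge D(\Lambda,E)-\vol(E_2\setminus E_1)$) is precisely your sandwich output, equivalently $D(\Lambda,E)-\vol(E_2\setminus E_1)\le \max\{D(\Lambda,E_1),D(\Lambda,E_2)\}$. The lemma as printed almost certainly carries a sign typo: $\vol(E_2\setminus E_1)$ should appear on the right, not the left. You should have trusted your own elementary derivation here rather than trying to force the printed form; the moment the natural argument lands on the opposite side of the target, a counterexample check is the first thing to do.
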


\begin{proof}[Proof of \Cref{Analogue of Schmidt 1}]
%We may assume that
%any connected component of $\{\T_r\}_r$ is closed, i.e. if $(k_\infty, k'_\infty)\times\prod_{p\in S_f} \{p^{k_p}\}\subseteq \{\T'_r\}_r$, then $[k_\infty, k'_\infty]\times\prod_{p\in S_f} \{p^{k_p}\}\subseteq \{\T_r\}_r$ by adding end points.

%%%
For a given $\delta\in \left(\frac 2 3, 1\right)$, choose $\alpha\in \left(\frac 1 {2\delta-1}, \frac 1 {1-\delta}\right)$.
In our strategy, we only concern the set $\{\vol(E_{\psi'}(\T'_r))\}_r$. 

For each $k\in \NN$ with $\{\vol(E_{\psi'}(\T'_r))\}_r \cap [k^\alpha, (k+1)^\alpha]\neq \emptyset$, define the sets $E^+(k)$ and $E^-(k)$:
\[\begin{split}
%\vol(E_{\psi'}(\T'_r))\in [k^\alpha, (k+1)^\alpha]
%&\Rightarrow E_{\psi'}(\T'_r)\subseteq E^+(k);\\
%\vol(E_{\psi'}(\T'_r))\in [k^\alpha, (k+1)^\alpha]
%&\Rightarrow E_{\psi'}(\T'_r)\supseteq E^-(k).
E^+(k)&=\bigcup\left\{E_{\psi'}(\T'_r): \vol(E_{\psi'}(\T'_r))\in [k^\alpha, (k+1)^\alpha]\right\};\\
E^-(k)&=\bigcap\left\{E_{\psi'}(\T'_r): \vol(E_{\psi'}(\T'_r))\in [k^\alpha, (k+1)^\alpha]\right\}.
\end{split}\]
Since $\{E_{\psi'}(\T'_r)\}$ is increasing, we have that 
\begin{enumerate}[(1)]
\item $E^-(k)\subseteq E_{\psi'}(\T'_r)\subseteq E^+(k)$ when $\vol(E_{\psi'}(\T'_r))\in [k^\alpha, (k+1)^\alpha]$;
\item $k^\alpha \le \vol(E^-(k)) \le \vol(E^{+}(k)) \le (k+1)^\alpha$.
\end{enumerate}

%%%
We claim that 
\begin{equation}\label{discrete reduction 1}\begin{split}
&\limsup_{r}\left\{\Lambda \in Y_{\ov_d/N} : D(\Lambda, E_{\psi'}(\T'_r))> \vol(E_{\psi'}(\T'_r))^{\delta}\right\}\\
&\subseteq 
\limsup_{\scriptsize \begin{array}{c}
k\in \NN\\
\exists E^{\pm}(k)
\end{array}}
\left\{\Lambda \in Y_{\ov_d/N} : \begin{array}{c} D(\Lambda, E^+(k))> \frac 1 2 \vol(E^+(k))^{\delta}\;\text{or}\\[0.05in]
D(\Lambda, E^-(k))> \frac 1 2 \vol(E^-(k))^{\delta}\end{array}\right\}.
\end{split}\end{equation}
Indeed, for any $\Lambda\in Y_{\ov_d/N}$ such that there is $E_{\psi'}(\T'_r)$ with
\[
\vol(E_{\psi'}(\T'_r))\in [k^\alpha, (k+1)^\alpha]
\quad\text{and}\quad
D(\Lambda, E_{\psi'}(\T'_r))> \vol(E_{\psi'}(\T'_r))^\delta,
\]
using \Cref{discrepency}, since $3<\alpha < 1/(1-\delta)$, we have
\[\begin{split}
\max\left\{D(\Lambda, E^+(k)), D(\Lambda, E^-(k))\right\}
&> k^{\alpha \delta} - ck^{\alpha-1} > \frac 2 3 k^{\alpha\delta} \\
&> \frac 1 2 \max\{\vol(E^+(k)), \vol(E^-(k))\}^{\delta}
\end{split}\]
provided that $k\in \NN$ is sufficiently large, where
$c>0$ is taken so that $(k+1)^\alpha -k^\alpha < ck^{\alpha-1}$ for any $k\in \NN$.

%%%
By \Cref{GH22 Theorem 5.2} and the fact that $\alpha>1/(2\delta -1)$,
\[\begin{split}
&\sum_{\scriptsize \begin{array}{c} 
k\in \NN\\
\exists E^{\pm}(k)
\end{array}} 
\mu_S\left(\left\{\Lambda \in Y_{\ov_d/N} : \begin{array}{c} D(\Lambda, E^+(k))> \frac 1 2 \vol(E^+(k))^{\delta}\;\text{or}\\[0.05in]
D(\Lambda, E^-(k))> \frac 1 2 \vol(E^-(k))^{\delta}\end{array}\right\}
\right)\\
%%%
&\hspace{0.4in}\le
\sum_{\scriptsize \begin{array}{c} 
k\in \NN\\
\exists E^{\pm}(k)
\end{array}} 
4C_d \left(\vol(E^+(k))^{1-2\delta}+ \vol(E^-(k))^{1-2\delta}\right)\\
%%%
&\hspace{0.4in}\le
\sum_{k\in \NN} 8C_d (k+1)^{\alpha(1-2\delta)} <\infty.
\end{split}\]

Therefore, using Borel-Cantelli lemma and from \Cref{discrete reduction 1}, the set
$\limsup_{r}\left\{\Lambda \in Y_{\ov_d/N} : D(\Lambda, E_{\psi'}(\T'_r))> \vol(E_{\psi'}(\T'_r))^{\delta}\right\}$
is null, hence for almost all $\Lambda\in Y_{\ov_d/N}$, there is $r_0=r_0(\Lambda)$ such that
\[
\left|\#(\Lambda\cap E_{\psi'}(\T'_r))-\vol(E_{\psi'}(\T'_r))\right| 
< \vol(E_{\psi'}(\T'_r))^\delta. 
\]
for all $r\ge r_0$.
\end{proof}

% proof of the second main thm %%%%%%%%%%%%
Notice that in the proof of \Cref{Analogue of Schmidt 1}, to apply the Borel-Cantelli lemma, we needed an appropriate discretization on the set $\{\vol(E_{\psi'}(\T'_r))\}_r$ of volumes.
We want to use a similar technique to prove \Cref{Analogue of Schmidt 2}, and the condition for the set of volumes in the theorem is to ensure such a discretization.

\begin{proof}[Proof of \Cref{Analogue of Schmidt 2}]
Let such $(\delta_1, \delta, \alpha)$ and $C_{\psi'}>0$ exist. For each $k\in \NN$, define the set 
\[\begin{split}
\mathcal E^+(k)
&:=\left\{E_{\psi'}(\T'): \vol(E_{\psi'}(\T'))\in [k^\alpha, (k+1)^\alpha]
\;\text{and}\; E_{\psi'}(\T')\text{ is $(k,\alpha)$-maximal}\right\};\\
%%%
\mathcal E^-(k)
&:=\left\{E_{\psi'}(\T'): \vol(E_{\psi'}(\T'))\in [k^\alpha, (k+1)^\alpha]
\;\text{and}\; E_{\psi'}(\T')\text{ is $(k,\alpha)$-minimal}\right\}.
\end{split}\] 
It is obvious that for any $\T'\in \RR_{\ge 1}\times \prod_{p\in S_f} p^\NN$ with $\vol(E_{\psi'}(\T'))\ge 1$, there are $k\in \NN$ and sets $E^\pm \in \mathcal E^\pm(k)$ for which
\[
E^-\subseteq E_{\psi'}(\T') \subseteq E^+.
\]

%%%
Using the similar argument in the proof of \Cref{main thm 1}, since $\alpha<1/(1-\delta)$, we obtain that
\begin{equation}\label{discrete reduction 2}\begin{split}
&\limsup_{\T'}\left\{\Lambda \in Y_{\ov_d/N} : D(\Lambda, E_{\psi'}(\T'))> \vol(E_{\psi'}(\T'))^{\delta}\right\}\\
&\subseteq 
\limsup_{k\in \NN} 
\left\{\Lambda \in Y_{\ov_d/N} :  \begin{array}{c}
D(\Lambda, E)> \frac 1 2 \vol(E)^{\delta}\\[0.05in]
\text{for some }E\in \mathcal E^+(k)\cup\mathcal E^-(k)
\end{array}\right\}.
\end{split}\end{equation}

By the assumption, since $\#(\mathcal E^+(k) \cup \mathcal E^-(k))<C_{\psi'} k^{\delta_1}$, from \Cref{GH22 Theorem 5.2}, it follows that
\[\begin{split}
&\sum_{k\in \NN} \mu_S
\left\{\Lambda \in Y_{\ov_d/N} :  D(\Lambda, E)> \frac 1 2 \vol(E)^{\delta}\;\text{for some }E\in \mathcal E^+(k)\cup\mathcal E^-(k)\right\}\\
%%%
&\le \sum_{k\in \NN} C_{\psi'}k^{\delta_1}\times 4C_d (k+1)^{\alpha(1-2\delta)}<\infty
\end{split}\]
provided that $\alpha>(1+\delta_1)/(2\delta-1)$.
%%%
Again, the Borel-Cantelli lemma says that the limsup set in the left-hand side of \Cref{discrete reduction 2} is a null set and therefore we obtain the theorem.
\end{proof}

\subsection{Proof of \Cref{main thm 1} and \Cref{main thm 2}}
Let us prove \Cref{main thm 2}. 
Denote by
\[
\SU=\left\{\su_A=\left(\begin{array}{cc}
\id_m & A \\
0 & \id_n \end{array}\right): A\in \Mat_{m,n}(\QQ_S)\right\}
\]
and $\SH=\prod_{p\in S} H_p$, where for each $p\in S$,
\[
H_p=\left\{h_p=\left(\begin{array}{cc}
\alpha_p & 0\\
\beta_p & \gamma_p \end{array}\right)\in \SL_d(\QQ_p): 
\begin{array}{c}
\alpha_p\in \GL_m(\QQ_p), \; \gamma_p\in \GL_n(\QQ_p),\\
 \beta_p\in \Mat_{n,m}(\QQ_p)\end{array}\right\}.
\]
Then for any $\sh\in \SH$,
%$\psi=(\psi)_{p\in S}$, $N\in \NN_S$, $\ov_d=(\ov_m,\ov_n)\in \ZZ_S^d$, etc., defined as in previous sections.
%Keeping the notations in previous sections, note that 
\[
\NT_{\psi,A}(\T)=\#\su_A\left(N\ZZ_S^d+(\ov_m,\ov_n)\right) \cap E_\psi(\T)=\#\sh\su_A\left(N\ZZ_S^d+\ov_d\right) \cap \sh E_{\psi}(\T).
\]

Following the tactic of \cite{AGY2021}, we will show that there is a sequence $(\sh_\ell)_{\ell\in \NN}$ in $\SH$ so that the following holds: 
for almost all $A\in \Mat_{m,n}(\QQ_S)$ and any small $\vep>0$, one can find $\ell\in \NN$ for which there is $\T_0=\T_0(A, \vep)$ ($=\T_0(A,\ell)$ to be exact) such that 
\begin{equation}\label{eq4: main thm}
\left|\frac {\NT_{\psi,A}(\T)} {\vol\left(E_{\psi}(\T)\right)/N^2}-1\right|
=\left|
%\frac{\#\su_A\left(N\ZZ_S^d+\ov_d\right) \cap E_{\psi}(\T)}
%{\vol\left(E_{\psi}(\T)\right)/N^2}
%=
\frac{\#\sh_\ell\su_A\left(N\ZZ_S^d+\ov_d\right) \cap \sh_{\ell}E_{\psi}(\T)}{\vol\left(E_{\psi}(\T)\right)/N^2} - 1\right|<\vep
\end{equation}
holds for all $\T\succeq \T_0$.

\vspace{0.1in}
%%%%%%%%%%%%%%
Take any sequence $(\vep_\ell)_{\ell\in \NN}$ such that $\vep_\ell\in (0,1)$ and $\vep_\ell\rightarrow 0$ as $\ell\rightarrow \infty$. For each $\ell\in \NN$, let us define $\psi^+_{\ell}=(\psi^+_{p, \ell})_{p\in S}$ and $\psi^-_{\ell}=(\psi^-_{p, \ell})_{p\in S}$ by
\begin{equation}\label{psi pm}\begin{gathered}
\psi^+_{p,\ell}(\|\oy\|_p^n)=\left\{\begin{array}{cl}
(1+\vep_\ell) \psi_\infty\left(\dfrac 1 {1+\vep_\ell}\| \oy\|_\infty^n\right), &\text{for }p=\infty;\\[0.05in]
\psi_p(\|\oy\|^n_p), &\text{for }p\in S_f,
\end{array}\right.\quad\text{and}\\
\psi^-_{p,\ell}(\|\oy\|^n_p)=\left\{\begin{array}{cl}
\dfrac 1 {1+\vep_\ell} \psi_\infty\left((1+\vep_\ell)\| \oy\|_\infty^n\right), &\text{for }p=\infty;\\[0.15in]
\psi_p(\|\oy\|^n_p), &\text{for }p\in S_f.
\end{array}\right.
\end{gathered}\end{equation}

If $\psi$ has the condition in \Cref{main thm 2} (hence that of \Cref{Analogue of Schmidt 2}) with $(\delta_1, \alpha)$, then $\psi^{\pm}_\ell$ also has the same condition with the same pair $(\delta_1, \alpha)$ of constants (but possibly different $C_{\psi^\pm_\ell}>0$) for each $\ell\in \NN$.

\begin{lemma}\label{volume lemma}
Under the assumption of \Cref{Analogue of Schmidt 2},
for $(\vep_\ell)_{\ell\in \NN}$ and $(\psi^\pm_{\ell})_{\ell\in \NN}$ as above, one can find a sequence $(\sh_\ell)_{\ell\in \NN}$ in $\SH$ such that 
\[
E_{\psi^-_{\ell}}(\T^-)
\subseteq\sh_\ell E_\psi(\T)\subseteq E_{\psi^-_\ell}(\T^+),
\]
where $\T^-$ and $\T^+$ are defined as
$\T^-=\left(\frac 1 {1+\vep_\ell} T_\infty, T_{p_1}, \ldots, T_{p_s}\right)$ and
$\T^+=\left((1+\vep_\ell) T_\infty, T_{p_1}, \ldots, T_{p_s}\right)$, respectively when $\T=(T_p)_{p\in S}\in \RR_{\ge 1}\times \prod_{p\in S_f}\{p^k: k\in \NN\}$.

Moreover, it holds that for a.e. $A\in \Mat_{m,n}(\QQ_S)$ and any $\ell\in \NN$, 
\[
\# \sh_\ell \su_A\left(N\ZZ_S^d + \ov_d\right) \cap E_{\psi^{\pm}_{\ell}}(\T)=\frac {(1+\vep_\ell)^{\pm 2}} {N^2} \vol(E_{\psi}(\T))+ O_N(\vol(E_{\psi}(\T))^\delta)
\]
for all sufficiently large $\T$.
\end{lemma}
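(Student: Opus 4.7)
The lemma has two parts: a geometric sandwich and a counting asymptotic, and I would handle them in that order.

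\emph{Construction and sandwich.} I would take $\sh_\ell=(h_{\ell,p})_{p\in S}$ with $h_{\ell,p}=\id$ at every finite place and
\[
h_{\ell,\infty}=\begin{pmatrix}(1+\vep_\ell)^{-1/m}\id_m & 0 \\ 0 & (1+\vep_\ell)^{1/n}\id_n\end{pmatrix}\in \SL_d(\RR)\cap H_\infty,
\]
so $\sh_\ell\in\SH\cap\SL_d(\QQ_S)$. Since $\psi^{\pm}_{p,\ell}=\psi_p$ and $h_{\ell,p}=\id$ for each $p\in S_f$, the sandwich is automatic at every finite place. At infinity, $h_{\ell,\infty}$ scales the norms as $\|h_{\ell,\infty}\ox\|_\infty^m=(1+\vep_\ell)^{-1}\|\ox\|_\infty^m$ and $\|h_{\ell,\infty}\oy\|_\infty^n=(1+\vep_\ell)\|\oy\|_\infty^n$. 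Substituting into the definitions of $\psi^{\pm}_{\ell,\infty}$ and $\T^\pm$ and invoking the monotonicity of $\psi_\infty$ then yields $E_{\psi^-_\ell}(\T^-)\subseteq \sh_\ell E_\psi(\T)\subseteq E_{\psi^+_\ell}(\T^+)$ by direct computation (the upper containment using $\psi^+_{\ell,\infty}(z)\ge(1+\vep_\ell)^{-1}\psi_\infty(z/(1+\vep_\ell))$, the lower containment by applying the same argument to $\sh_\ell^{-1}$).

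\emph{Counting via the $S$-arithmetic Schmidt theorem.} Since $E_{\psi^{\pm}_\ell}(\T)=\prod_{p\in S}E_{\psi^{\pm}_{\ell,p}}(T_p)$ is a product of bounded Borel sets, \Cref{Analogue of Schmidt} applies and gives, for $\mu_S$-a.e.\ $\sg\in \SL_d(\QQ_S)$,
\[
\#\sg(N\ZZ_S^d+\ov_d)\cap E_{\psi^{\pm}_\ell}(\T) = \frac{1}{N^d}\vol(E_{\psi^{\pm}_\ell}(\T)) + O_N\!\bigl(\vol(E_{\psi^{\pm}_\ell}(\T))^{\delta}\bigr)
\]
for all sufficiently large $\T$. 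A change of variables at the infinite place ($\tilde\oy=(1+\vep_\ell)^{\mp 1/n}\oy$, $\tilde\ox=(1+\vep_\ell)^{\pm 1/m}\ox$) reduces $\vol(E_{\psi^{\pm}_\ell}(\T))$ to $(1+\vep_\ell)^{\pm 2}\vol(E_\psi(\T))$ up to a boundary correction of lower order that is absorbable into the $O_N(\vol(E_\psi(\T))^\delta)$ error. This would deliver the stated asymptotic.

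\emph{Main obstacle: transfer to a.e.\ $A$.} The subtle step is passing from ``$\mu_S$-a.e.\ $\sg\in\SL_d(\QQ_S)$'' in \Cref{Analogue of Schmidt} to ``Lebesgue-a.e.\ $A\in\Mat_{m,n}(\QQ_S)$'' for the single fixed $\sh_\ell$. The image of $A\mapsto \sh_\ell\su_A$ lies in the $mn$-codimensional submanifold $\sh_\ell\SU\subset \SL_d$, which is $\mu_{\SL_d}$-null, so the transfer is not formal. My plan is to reprise the Chebyshev/second-moment argument behind \Cref{GH22 Theorem 5.2} directly along the family $A\mapsto \sh_\ell\su_A\Lambda_0$ in $Y_{\ov_d/N}$, using the Siegel-transform expression of the counting function; the block-diagonal conjugation identity $\sh_\ell\su_A\sh_\ell^{-1}=\su_{\alpha_\ell A\gamma_\ell^{-1}}$ (valid because $\sh_\ell$ has trivial lower-left block) lets one absorb $\sh_\ell$ into a linear reparametrization of $A$, reducing matters to a second-moment bound over the $\SU$-orbit of the fixed lattice $\sh_\ell\Lambda_0$. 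The discretization-plus-Borel--Cantelli argument already used in the proof of \Cref{Analogue of Schmidt} then upgrades this second-moment bound to the desired almost-sure asymptotic for Lebesgue-a.e.\ $A$.
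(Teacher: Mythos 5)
Your construction of a specific diagonal $\sh_\ell$ does give the set-inclusion part of the lemma correctly: the rescalings $(1+\vep_\ell)^{-1/m}$, $(1+\vep_\ell)^{1/n}$ on the two blocks produce the sandwich by the direct computation you describe. The volume computation $\vol(E_{\psi^\pm_\ell}(\T^\pm))=(1+\vep_\ell)^{\pm 2}\vol(E_\psi(\T))$ is likewise fine, and in fact is exact (no lower-order boundary correction needed) because the change of variables is a linear scaling. So the first half of your plan is sound, and it is in a sense simpler than what the paper does for the inclusions, since the paper verifies the sandwich for \emph{every} $\sh$ in a whole open neighborhood $\SH_\ell$ of the identity rather than for a hand-picked $\sh_\ell$.

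But that apparent simplification is exactly what creates the genuine gap you correctly identify — and the fix you propose is not established. By insisting on a concrete $\sh_\ell$ chosen in advance, you are forced to prove an effective counting statement along the fixed null submanifold $\sh_\ell\SU\subset\SL_d(\QQ_S)$, for which \Cref{Analogue of Schmidt} and \Cref{GH22 Theorem 5.2} say nothing. Your proposed remedy — to ``reprise the Chebyshev/second-moment argument directly along the $\SU$-orbit,'' i.e.\ a Rogers-type variance bound with respect to the Haar measure on $\Mat_{m,n}(\QQ_S)$ rather than on $\SL_d(\QQ_S)/\,\mathrm{stab}$ — is a substantial additional theorem that is not among the paper's ingredients and does not follow from the conjugation identity $\sh_\ell\su_A\sh_\ell^{-1}=\su_{\alpha_\ell A\gamma_\ell^{-1}}$ alone (that only reparametrizes $A$ and replaces the base affine lattice by $\sh_\ell(N\ZZ_S^d+\ov_d)$; it does not produce a second moment over the one-parameter family). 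The paper evades this entirely by \emph{not} fixing $\sh_\ell$ a priori: it verifies the sandwich for all $\sh\in\SH_\ell$, uses that the map $\Phi(\sh,\su_A)=\sh\su_A$ is a diffeomorphism off a null set so that $\mu_S$ disintegrates as $\mu_\SH\otimes\mu_\SU$, and then by Fubini the full-measure set of $\sg$ furnished by \Cref{Analogue of Schmidt} meets $\{\sh\}\times\SU$ in a full-measure set of $\su_A$ for $\mu_\SH$-almost every $\sh$; since $\SH_\ell$ has positive $\mu_\SH$-measure, one may choose $\sh_\ell\in\SH_\ell$ accordingly, and then intersect over $\ell\in\NN$. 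You should replace your fixed choice of $\sh_\ell$ by this generic-and-then-chosen $\sh_\ell$; otherwise the transfer step has no proof.
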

\begin{proof}
For each $\ell\in \NN$, define $H_{\infty, \vep_\ell}=\widetilde{H}_{\infty, \vep_\ell}\cap \widetilde{H}_{\infty, \vep_\ell}^{-1}$, where
\[
\widetilde{H}_{\infty, \vep_\ell}
=\left\{h_\infty=\left(\begin{array}{cc}
\alpha_\infty & 0 \\
\beta_\infty & \gamma_\infty\end{array}\right) \in H_\infty: 
\begin{array}{cc}
\|\alpha_\infty\|_{op}^m\le 1+\vep_\ell, \\[0.05in]
\|\gamma_\infty\|_{op}^n\le \frac {1+ {\vep_\ell}} 2,\;\|\beta_\infty\|_{op}\le \frac{\vep_\ell} {4n} \end{array}\right\}
\] 
and for $p\in S_f$,
\[
H_p(\ZZ_p)
=\left\{h_p=\left(\begin{array}{cc}
\alpha_p & 0 \\ 
\beta_p & \gamma_p \end{array}\right)\in H_p: 
\begin{array}{cc} 
\alpha_p\in \GL_m(\ZZ_p),\; \gamma_p\in \GL_n(\ZZ_p)\\
\beta_p\in \Mat_{n,m}(\ZZ_p)\end{array}\right\}.
\]
Here, $\|\cdot\|_{op}$ is the operator norm on $\RR^m$ or $\RR^n$ when $p=\infty$. It is well-known that $\SH_\ell:=H_{\infty, \vep_\ell}\times\prod_{p\in S_f} H_p(\ZZ_p)$ is an open neighborhood of the identity element in $\SH$.

%%%%%%
Let us first show that for any $\sh\in \SH_\ell$ and $\T\in \RR_{\ge 1}\times \prod_{p\in S_f}\{p^k: k\in \NN\}$, 
\[
E_{\psi^-_{\ell}}(\T^-)
\subseteq\sh E_\psi(\T)\subseteq E_{\psi^+_\ell}(\T^+).
\]

If $p=\infty$, the sets
\[\begin{split}
&E_{\psi^\pm_{\infty,\ell}}((1+\vep_\ell)^{\pm 1}T_\infty)\\
&\hspace{0.3in}=\left\{(\ox,\oy)\in \RR^m\times \RR^n:\begin{array}{c}
\|\ox\|_\infty^m\le (1+\vep_\ell)^{\pm 1}\psi_\infty((1+\vep_\ell)^{\mp 1} \|\oy\|_\infty^n);\\[0.05in]
\|\oy\|_\infty^n \le (1+\vep_\ell)^{\pm 1} T_\infty
\end{array}\right\}.
\end{split}\]
Note that $h_\infty.(\ox,\oy)=(\alpha_\infty\ox, \beta_\infty\ox+\gamma_\infty\oy)$. 
For $(\ox,\oy)\in E_{\psi_\infty}(T_\infty)$,
\begin{equation}\label{eq3: main thm}\begin{split}
&\|\beta_\infty \ox+ \gamma_\infty \oy\|_\infty
\le \frac {\vep_\ell}{4n} \|\ox\|_\infty+ \left( 1+ \frac {\vep_\ell} 2 \right)^{1/n} \|\oy\|_\infty\\[0.1in]
&\hspace{0.2in}\le \frac {\vep_\ell} {4n} + \left( 1+ \frac {\vep_\ell} 2 \right)^{1/n} \|\oy\|_\infty
%\\[0.05in]&
\le \left\{\begin{array}{cl}
%\left(1+\dfrac {\vep_\ell} 2\right)^{1/n} + \dfrac {\vep_\ell} {4n} \le
 (1+\vep_\ell)^{1/n} &\text{if }\|\oy\|_\infty \le 1;\\[0.05in]
%\left(1+\dfrac {\vep_\ell} 2\right)^{1/n}\|\oy\|_\infty + \dfrac {\vep_\ell} {4n}\|\oy\|_\infty \le 
(1+\vep_\ell)^{1/n}\|\oy\|_\infty &\text{if }\|\oy\|_\infty \ge 1,
\end{array}\right.
\end{split}\end{equation}
and  
\[
\|\alpha_\infty \ox\|_\infty^m
\le (1+\vep_\ell)\psi_\infty(\|\oy\|_\infty^n)
\le (1+\vep_\ell)\psi_\infty((1+\vep_\ell)^{-1}\|\beta_\infty\ox+\gamma_\infty\oy\|_\infty^n),
\]
where the second inequality is induced from \Cref{eq3: main thm} since $\psi_\infty$ is non-increasing. 
Here, we use the fact that $\psi_\infty$ is a constant function on $(0,1]$ and $(1+\vep_\ell)^{-1}\|\beta_\infty+\gamma_\infty\oy\|_\infty^n\le 1$ if $\|\oy\|_\infty\le 1$.
This shows that $\sh E_{\psi_\infty}(T_\infty)\subseteq E_{\psi^+_{\infty,\ell}}((1+\vep_\ell)T)$ for $\sh\in \SH_\ell$.
One can also obtain the fact that $E_{\psi^-_{\infty,\ell}}((1+\vep_\ell)^{-1}T)\subseteq \sh E_{\psi_\infty}(T_\infty)$ in a similar way.

For $p\in S_f$, we claim that $H_p(\ZZ_p)$ preserves $E_{\psi_p}(T_p)$.
Again, the action of $h_p$ is given by $h_p.(\ox,\oy)=(\alpha_p\ox, \beta_p\ox+\gamma_p\oy)$. Note that $\alpha_p\in \GL_m(\ZZ_p)$ and $\gamma_p\in \GL_n(\ZZ_p)$ preserve the $p$-adic norm. Since $\|\ox\|_p\le |\psi_p|_{\sup}=1$ and $\beta_p\in \Mat_{n,m}(\ZZ_p)$,
\[
\|\beta_p\ox + \gamma_p\oy\|_p
\left\{\begin{array}{ll}
= \|\oy\|_p, &\text{if }\|\oy\|_p\ge p;\\
\le \max(\|\beta_p\ox\|_p, \|\gamma_p\oy\|_p)\le 1, &\text{if }\|\oy\|_p\le 1,
\end{array}\right.
\]
and it follows that
\[
\|\alpha_p\ox\|_p^m
=\|\ox\|_p^m
\le \psi_p(\|\oy\|_p^n)
=\left\{\begin{array}{cl}
\psi_p(\|\beta_p\ox+\gamma_p\oy\|_p^n), &\text{if }\|\oy\|_p\ge p;\\
1=\psi_p(\|\beta_p\ox+\gamma_p\oy\|_p^n), &\text{if }\|\oy\|_p\le 1.
\end{array}\right.
\]

\vspace{0.1in}
%%%%%%
Now applying \Cref{Analogue of Schmidt 2} to $\psi^{\pm}_\ell$, it follows that for a.e. $\sg\in \SL_d(\QQ_S)$,
\begin{equation}\label{eq 1:pf of main thm} \begin{split}
&\#\sg\left(N\ZZ_S^d+\ov_d\right)\cap E_{\psi^{\pm}_\ell}(\T^\pm)\\
&\hspace{0.4in}=\frac 1 {N^2} \vol(E_{\psi^\pm_\ell}(\T^\pm))+O_N(\vol(E_{\psi^\pm_\ell}(\T^\pm))^\delta)
\end{split}\end{equation}
for all sufficiently large $\T^\pm$. Moreover, it is easy to compute that the volumes of $E_{\psi^\pm_\ell}(\T^\pm)$ are 
\begin{equation}\label{eq 2:pf of main thm}
\vol(E_{\psi^\pm_\ell}(\T^\pm))=(1+\vep_\ell)^{\pm 2} \vol\left(E_{\psi}(\T)\right).
\end{equation}

%%%
Consider the map $\phi:\SH \times \SU \rightarrow \SL_d(\QQ_S)$ given by $\phi(\sh,\su_A)=\sh\su_A$ which is a diffeomorphism up to a null set in $\SL_d(\QQ_S)$. Then $(\phi^{-1})_*(\mu_S)$ is equivalent to $\mu_\SH \otimes \mu_\SU$, where $\mu_\SH$ is a Haar measure on $\SH$ and $\mu_\SU=\vol$ under the natural identification $\SU\simeq \Mat_{m,n}(\QQ_S)$.
Hence from \Cref{eq 1:pf of main thm}, we have that
\[
\#\sh\su_A\left(N\ZZ_S^d+\ov_d\right)\cap E_{\psi^{\pm}_\ell}(\T^\pm)
=\frac 1 {N^2} \vol(E_{\psi^\pm_\ell}(\T^\pm))+O_N(\vol(E_{\psi^\pm_\ell}(\T^\pm))^\delta)
\]
for a.e. $(\sh,\su_A) \in \SH\times \SU$ with respect to the product measure $\mu_\SH\otimes \mu_\SU$. 
In particular, we can choose $\sh_\ell\in \SH_\ell$ for each $\ell\in\NN$ such that the above equation holds for a.e. $\su_A$. Let $\SU_\ell$ be a collection of such elements $\su_A\in \SU$ with respect to $\sh=\sh_\ell$. Then $\SU_\ell$ is of full measure, hence the intersection $\bigcap_{\ell\in \NN} \SU_\ell$ also has a full measure. Combining with \Cref{eq 2:pf of main thm}, the second assertion of the lemma follows.
\end{proof}

Now let us finish the proof of \Cref{main thm 2} by showing \Cref{eq4: main thm} for those $A\in \Mat_{m,n}(\QQ_S)$ as in \Cref{volume lemma}. 

Let sufficiently small $\vep>0$ be given. Choose $\ell\in \NN$ so that $\vep_\ell < \vep/6$. For $\sh_\ell$ in \Cref{volume lemma}, we have that
\[
\frac{\#\sh_\ell \su_A(N\ZZ_S^d+\ov_d)\cap E_{\psi^-_{\ell}}(\T)}{\vol(E_\psi(\T))/N^2}
\le \frac{N_{\psi, A}(\T)}{\vol(E_\psi(\T))/N^2}
\le \frac{\#\sh_\ell \su_A(N\ZZ_S^d+\ov_d)\cap E_{\psi^+_\ell}(\T)}{\vol(E_\psi(\T))/N^2}
\]
and
\[
\#\sh_\ell \su_A(N\ZZ_S^d+\ov_d)\cap E_{\psi^\pm_\ell}(\T)
=(1+\vep_\ell)^{\pm 2} \frac {\vol(E_\psi(\T))}{N^2} + O_N(\vol(E_\psi(\T))^\delta).
\]

Hence it follows that
\[
\left|\frac{N_{\psi, A}(\T)}{\vol(E_\psi(\T))/N^2}-1\right|
\le 3\vep_\ell + O_N(\vol(E_\psi(\T))^{\delta-1})
\le \vep
\]
for all sufficiently large $\T$ since we assume that $\vol(E_\psi(\T))\rightarrow \infty$ as $\T\rightarrow \infty$.

\vspace{0.1in}
The proof of \Cref{main thm 1} is similarly obtained when we use the sequence $(\T_r)_r$ in \Cref{main thm 1} instead of $\T\in \RR_{\ge 1}\times \prod_{p\in S_f} p^{\NN\cup\{0\}}$, and \Cref{volume lemma 1} using \Cref{Analogue of Schmidt 1}.

\begin{lemma}\label{volume lemma 1}
Under the assumption of \Cref{Analogue of Schmidt 1},
for $(\vep_\ell)_{\ell\in \NN}$ and $(\psi^\pm_{\ell})_{\ell\in \NN}$ as in \Cref{psi pm}, one can find a sequence $(\sh_\ell)_{\ell\in \NN}$ in $\SH$ such that 
\[
E_{\psi^-_{\ell}}(\T_r^-)
\subseteq\sh_\ell E_\psi(\T_r)\subseteq E_{\psi^-_\ell}(\T_r^+),\; \forall r,
\]
where $\T_r^\pm$ is defined as in \Cref{volume lemma}.
Moreover, it holds that for a.e. $A\in \Mat_{m,n}(\QQ_S)$ and any $\ell\in \NN$, 
\[
\# \sh_\ell \su_A\left(N\ZZ_S^d + \ov_d\right) \cap E_{\psi^{\pm}_{\ell}}(\T_r)=\frac {(1+\vep_\ell)^{\pm 2}} {N^2} \vol(E_{\psi}(\T_r))+ O_N(\vol(E_{\psi}(\T_r))^\delta)
\]
for all sufficiently large $r$.
\end{lemma}

\section{Further Questions}

\subsection{General divergent collection of approximating functions}\label{Dirichlet case} The restriction of the growth of the number of sets $E_\psi(\T)$ in \Cref{main thm 2} is crucial to use the Borel-Cantelli lemma in our proof. Even the collection of approximating function given as in \Cref{New Dirichlet cor} does not satify the condition: Suppose that $S=\{\infty, p\}$, and $\psi=(\psi_p)_{p\in S}$ is given by
\[
\psi_\infty(T_\infty)=\min\left\{1, \frac 1 {T_\infty}\right\}
\quad\text{and}\quad
\psi_p(T_p)\asymp \min\left\{1, \frac 1 {T_p}\right\}
\]
so that $\VT(\T)\asymp \log T_\infty \times \log_p T_p$.
Then for any $\alpha>0$, 
\[\begin{split}
&\#\left\{\T\in \RR_{\ge 1}\times p^{\NN\cup\{0\}}:
\begin{array}{c} 
\VT(\T)/N^d\in [k^\alpha, (k+1)^\alpha];\\
\T\text{ is $(N,k,\alpha)$-maximal or minimal}\\
%\text{$(N,k,\alpha)$-minimal}
\end{array} \right\}\\
%%%
&\hspace{0.4in}=\#\left\{\T\in \RR_{\ge 1}\times p^{\NN\cup\{0\}}:\VT(\T)=k^\alpha\;\text{or}\;(k+1)^\alpha\right\}\\
%%%
&\hspace{0.4in}\asymp\#\left\{T_p=p^{t_p}\in p^{\NN\cup\{0\}}: 1\ll t_p \ll k^\alpha\right\} 
%%%
\asymp k^\alpha,\;\forall k\in \NN.
\end{split}\]
Hence $\delta_1\ge\alpha$, where $(\delta_1, \delta, \alpha)$ is as in \Cref{main thm 2}, then the condition that 
\[
\alpha> \frac {1+\delta_1}{2\delta-1}\ge\frac {1+\alpha} {2\delta-1}
\quad\Rightarrow\quad
\delta> \frac {1+2\alpha} {2\alpha}
\]
contradicts to the condition that $\delta<1$.

%%%%%%%%%%%%%
\subsection{Counting solutions $(\op,\oq)\in \mathcal R^m\times \mathcal R^n$ for a subring $\mathcal R\subseteq\ZZ_S$}
It is well-known that a subring of $\ZZ_S$ is of the form $\ZZ_{S'}=\ZZ[1/{\prod_{p\in S'} p}]$ (as a subring of $\QQ$), where $S' \subseteq S$ (when $S'=\{\infty\}$, $\mathcal R=\ZZ$).
Then one can consider the Khintchine-Groshev type theorem for the set of $A=(A_p)_{p\in S}\in \Mat_{m,n}(\QQ_S)$ such that there are infinitely many $(\op,\oq)\in \mathcal R^m\times \mathcal R^n$ for which
\[
\|A_p\oq+\op\|_p^m \le \psi_p(\|\oq\|_p^n),\;\forall p\in S,
\]
where $\psi=(\psi_p)_{p\in S}$ is a collection of approximating functions.

However, when we want to quantify this theorem for the case when $\mathcal R\subsetneq \ZZ_S$, since $\mathcal R^d$ and $\SL_d(\mathcal R)$ are not lattice subgroups of $\QQ_S^d$ or $\SL_d(\QQ_S)$ respectively, there are no moment formulas (which are so-called \emph{Rogers' formulas}) crucially used in proving \Cref{GH22 Theorem 5.2}.

In this point of view, it is also difficult to consider the function counting $(\op,\oq)\in \ZZ^m\times \ZZ^n$ with inequalities given by the collection $\psi=(\psi_p)_{p\in S}$ of approximating functions, where $S$ is the set of finite places only, which is the case similar to that of \cite{KT2007} (more precisely, the authors in \cite{KT2007} concerned a Khintchine-Groshev type theorem using the $S$-norm with $S$ not containing the infinite place) by using the method presented in this article. 

%%%%%%%%%%%%%
\subsection{Quantitative Khintchine-Groshev theorem for a different $S$-arithmetic generalization}
As described in \Cref{Sec: Divergence Case: The Quantitative Result},
one can quantify the Khintchine-Groshev theorem which counts
\[
\#\left\{(\op,\oq)\in \ZZ_S^m\times \ZZ_S^n: 
\|\oq\|_S^n \le T\;\text{and}\; \|A\oq-\op\|_S^m < \psi_0(\|\oq\|_S^n) \right\}
\]
as $T$ goes to infinity, for a given non-increasing function $\psi_0:\RR_{>0}\rightarrow\RR_{>0}$. 
Unlike to two $S$-arithmetic analogs of Dirichlet's theorem and corollary, \Cref{New Khintchine-Groshev thm} does not imply the $S$-arithmetic Khintchine-Groshev theorem introduced in \cite{KT2007} and vice versa.
To the best of my knowledge, it is hard to obtain a statement similar to \Cref{GH22 Theorem 5.2} even for an approximating function of the form $\psi_0(\|\oq\|_S^n)\asymp \|\oq\|_S^{-\nu}$ for some positive number $\nu>0$, which is one of key steps in our strategy.

%%%%%%%%%%%%%%%%%%%%%%%%%%%%%%%%%%%%%%%%%%%%%%%%%%%%%%%%%%%%%%%%%%%%

\end{document}